\documentclass[12pt]{amsart}
\usepackage{geometry}   %????????
\usepackage[colorlinks,citecolor = red, linkcolor=blue,hyperindex]{hyperref}
\usepackage{euscript,eufrak,verbatim, mathrsfs}
\usepackage[psamsfonts]{amssymb}
\usepackage[usenames]{color}
\usepackage{bbm}
\usepackage{graphicx}
 \usepackage{color}
 \usepackage{float}

 \usepackage{euscript}
 \usepackage{xypic}
\usepackage{helvet}         % selects\textbf{\textbf{•}} Helvetica as sans-serif font
\usepackage{courier}        % selects Courier as typewriter font
\usepackage{type1cm}        % activate if the above 3 fonts are
%                            % not available on your system
%%
%\usepackage{makeidx}         % allows index generation
%                             % when including figure files
\usepackage{multicol}        % used for the two-column index
\usepackage[bottom]{footmisc}% places footnotes at page bottom

\newtheorem{theorem}{Theorem}[section]
\newtheorem*{theorem*}{Theorem B}
\newtheorem{lemma}[theorem]{Lemma}

\newtheorem{proposition}[theorem]{Proposition}
\newtheorem{corollary}[theorem]{Corollary}
\newtheorem{definition}[theorem]{Definition}
\newtheorem*{definition*}{Definition}
\newtheorem*{remark*}{Remark}

\newtheorem*{observation*}{Observation}

\newtheorem*{assumption*}{Assumption}
\newtheorem*{question*}{Question}
\newtheorem{remark}[theorem]{Remark}

\geometry{left=2.8cm,right=2.8cm,top=2.5cm,bottom=2.5cm}

\newcommand{\R}{\mathbb{R}}
\newcommand{\N}{\mathbb{N}}
\newcommand{\Z}{\mathbb{Z}}

\newcommand{\C}{\mathbb{C}}
\newcommand{\E}{\mathbb{E}}

\newcommand{\T}{\mathbb{T}}

\newcommand{\tr}{\mathrm{tr}}

\newcommand{\dt}{\mathrm{det}}

\newcommand{\an}{\text{\, and \,}}

\begin{document}

\title[$\psi$-mixing and $L^q$-dimensions of DPP]{Stationary determinantal processes: $\psi$-mixing property and $L^q$-dimensions}

\author%[authorlabel1]
{Shilei Fan}
\address%[authorlabel1]
{Shilei Fan: School of Mathematics and Statistics, Hubei Key Laboratory of Mathematical Sciences, Central  China Normal University,  Wuhan, 430079, China}
\email{slfan@mail.ccnu.edu.cn}

\author%[authorlabel1]
{Lingmin Liao}
\address%[authorlabel1]
{Lingmin Liao: LAMA, UMR 8050, CNRS,
Universit\'e Paris-Est Cr\'eteil Val de Marne, 61 Avenue du
G\'en\'eral de Gaulle, 94010 Cr\'eteil Cedex, France}
\email{lingmin.liao@u-pec.fr}

\author%[authorlabel1]
{Yanqi Qiu}
\address%[authorlabel1]
{Yanqi Qiu: Institute of Mathematics, Academy of Mathematics and Systems Science, Chinese Academy of Sciences, Beijing 100190, China}
\email{yanqi.qiu@hotmail.com}

\begin{abstract}
The results of this paper are 3-folded. Firstly, for  any stationary determinantal process on the integer lattice, induced by strictly positive and strictly contractive involution kernel, we obtain the necessary and sufficient condition for the $\psi$-mixing property. Secondly,  we obtain the existence of the $L^q$-dimensions of the stationary determinantal measure on symbolic space $\{0, 1\}^\N$ under appropriate conditions. Thirdly, the previous two results together imply the precise increasing rate of  the longest common substring of a typical pair of points in $\{0, 1\}^\N$.
\end{abstract}

\subjclass[2010]{Primary 60G55, 37A25; Secondary 60G10.}
\keywords{stationary determinantal point processes; $\psi$-mixing, $L^q$-dimensions}

\maketitle

\setcounter{equation}{0}

\section{Introduction}

\subsection{Stationary determinantal point processes}

Let $\T: = \R/\Z$ be the unit circle.   For any  Borel function $f: \T  \rightarrow [0, 1]$, the convolution kernel $K_f : \Z\times \Z \to \C$   defined by
\begin{align}\label{def-kernel}
K_f (n, m) : = \widehat{f}(n-m) = \int_{0}^1  f(t) e^{- 2 \pi i  (n-m) t  } dt, \quad \forall n, m \in \Z
\end{align}
defines a self-adjoint positive contractive operator on the Hilbert space $\ell^2(\Z)$ and thus, by Macchi-Soshnikov-Shirai-Takahashi Theorem \cite{Lyons-DPP-IHES,Lyons-stationary,ST-palm2}, induces a stationary determinantal point process, denoted by $\mu_f$,  on $\Z$. More precisely, $\mu_f$ is the probability measure on $\{0, 1\}^\Z$ such that for any distinct points $n_1, \cdots, n_k \in \Z$, we have
\[
\mu_f \Big( \Big\{x \in \{0, 1\}^\Z\Big| x_{n_1} = x_{n_2} = \cdots = x_{n_k} =1 \Big\}\Big) =  \det (\widehat{f}(n_i- n_j))_{1\le i, j \le k }.
\]
Similarly, the kernel \eqref{def-kernel} restricted on $\N\times \N$ (where $\N$ is the set of  non-negative integers), induces a determinantal probability measure,  denoted by $\mu_f^{+}$,  on $\{0, 1\}^\N$, such that for any distinct points $n_1, \cdots, n_k \in \N$, we have
\[
\mu_f^{+} \Big( \Big\{x \in \{0, 1\}^\N\Big| x_{n_1} = x_{n_2} = \cdots = x_{n_k} =1 \Big\}\Big) =  \det (\widehat{f}(n_i- n_j))_{1\le i, j \le k }.
\]

 Clearly, $\mu_f$ is invariant under the shift operator $\sigma: \{0, 1\}^\Z\rightarrow \{0, 1\}^\Z$ defined by \[
\sigma ((x_n)_{n\in \Z})=(x_{n+1})_{n\in \mathbb{Z}}, \quad x   = (x_n)_{n\in \Z} \in \{0, 1\}^\Z.
\]
 Similarly, $\mu_f^{+}$ is invariant under the one-sided shift on $\{0, 1\}^\N$, which by slightly abusing the notation will also be denoted by $\sigma$.

In this paper, we  will consider the probability-measure-preserving dynamical systems: $(\{0, 1\}^\Z, \sigma, \mu_f)$ and  $(\{0, 1\}^\N, \sigma, \mu_f^{+})$ and will study  the  mixing properties, the  fractal dimensions etc.  The reader is referred to  Lyons and Steif \cite[Corollary 8.4]{Lyons-stationary} for more properties on the stationary determinantal point processes on $\Z^d$.

\subsection{The $\psi$-mixing property}
Let us recall the $\psi$-mixing property for stochastic processes on $\Z$.  In what follows,  let $(\xi_n)_{n\in \Z}$  be the  sequence  of  random variables  taking values  in $\{0, 1\}$ with joint distribution $\mu_f$, that is, for any distinct points  $n_1, \cdots, n_k \in \Z$,
\[
\E(\xi_{n_1} \cdots \xi_{n_k})=  \det (\widehat{f}(n_i - n_j))_{1\le i, j \le k}.
\]
For any pair of integers $n\leq m$, let
\begin{align}\label{def-sigma-F}
\mathcal{F}_{n}^m:=\vee(\xi_n,\xi_{n+1},\cdots,\xi_m)
\end{align}
 be the
sigma-algebra generated by the random variables $\xi_n, \cdots, \xi_m$. Similarly, set
\[\mathcal{F}_{-\infty}^{n}=\vee(\cdots,\xi_{n-1},\xi_{n}), \quad   \mathcal{F}_{n}^{+\infty}=\vee(\xi_n,\xi_{n+1},\cdots).
\]
The $\psi$-function of the measure $\mu_f$ is defined as follows: for each integer $\ell\geq 1$, set
\begin{align}\label{def-psi-n}
\psi_{\mu_f}(\ell): =\sup_{\substack{A\in \mathcal{F}_{-\infty}^{0},B\in\mathcal{F}^{+\infty}_{\ell}\\ \mu_f(A)>0, \mu_{f}(B)>0}}\left|\frac{\mu_f(A\cap B)}{
\mu_f(A)\mu_f(B)}-1\right| \in [0, \infty].
\end{align}

\begin{definition}
We say that $\mu_f$  is $\psi$-mixing if
$
\lim_{\ell \to +\infty}\psi_{\mu_f}(\ell)=0.
$
\end{definition}

Recall that we say that an integrable function  $f: \T\rightarrow \C$ is  in the Sobolev space $H^{1/2}(\T)$ if
\[
\sum_{n=-\infty}^{+\infty}|n| \cdot |\widehat{f}(n)|^{2}<  \infty.
\]

\begin{theorem}\label{psi-mixing}
Let $f: \T \to [0,1]$ be an integrable function  such that $f \not \equiv 0$ and $f \not \equiv 1$..  Then the  $\psi$-function  of the measure $\mu_f$ satisfies: for any integer $\ell \ge 1$,
\begin{align}\label{lower-bdd}
\psi_{\mu_f}(\ell)\geq 1-\exp\Big(- \frac{1}{\ell + 1}\sum_{n=\ell + 1}^\infty |n||\widehat{f}(n)|^{2}\Big).
\end{align}
In particular,  if $\mu_f$ is $\psi$-mixing, then  $f\in H^{1/2}(\T)$. Conversely, assume that there exists $\tau>0$ such that $f$ satisfies:
\begin{align}\label{suff-cond}
\text{$f\in H^{1/2}(\T)$ and $\tau\leq f\leq 1-\tau$.}
\end{align}
Then $\mu_f$  is $\psi$-mixing and  its  $\psi$-function satisfies
\begin{align}\label{upper-bound}
\psi_{\mu_f}(\ell)&\leq \frac{1}{\tau^2} \Big(\sum_{n=\ell+1}^{+\infty}|n||\widehat{f}(n)|^{2}\Big)\cdot \exp\Big(1+\frac{1}{\tau^2}\sum_{n=\ell+1}^\infty |n||\widehat{f}(n)|^{2}\Big).
\end{align}
\end{theorem}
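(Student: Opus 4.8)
The plan is to derive both estimates from one structural remark: the only coupling between the past $P$ (coordinates $\le 0$) and the future $Q$ (coordinates $\ge\ell$) is carried by the off-diagonal block $K_{PQ}=(\widehat f(p-q))_{p\in P,\,q\in Q}$, every entry of which has $|p-q|\ge\ell$. Counting, for each $n$, the pairs $(p,q)$ with $q-p=n$, one sees that the Hilbert--Schmidt norm of this block is governed by the $H^{1/2}$-tail,
\[
\|K_{PQ}\|_{\mathrm{HS}}^2=\sum_{p\in P,\,q\in Q}|\widehat f(p-q)|^{2}\ \asymp\ \sum_{n>\ell}|n|\,|\widehat f(n)|^{2}.
\]
Since the probabilities of all events in play are Fredholm determinants of compressions of $K_f$, a Schur-complement computation will in each case reduce the normalized correlation to a determinant $\det(I-\Theta)$ with $\Theta$ assembled from $K_{PQ}$ and from local inverses on $P$ and $Q$; the two halves of the theorem then differ only in how $\det(I-\Theta)$ is estimated.

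For the lower bound I would test \eqref{def-psi-n} on hole events. For $N\ge1$ set $A=\{x:\,x_p=0,\ -N<p\le0\}\in\mathcal F_{-\infty}^{0}$ and $B=\{x:\,x_q=0,\ \ell\le q<\ell+N\}\in\mathcal F_{\ell}^{+\infty}$; as $f\not\equiv1$, no finite compression of $K_f$ has eigenvalue $1$, so $\mu_f(A),\mu_f(B)>0$. The emptiness probabilities are $\det(I-K_f|_{\,\cdot\,})$, and expanding the joint determinant in $P,Q$-blocks and taking the Schur complement gives
\[
\frac{\mu_f(A\cap B)}{\mu_f(A)\mu_f(B)}=\det\!\bigl(I-(I-K_{QQ})^{-1}K_{QP}(I-K_{PP})^{-1}K_{PQ}\bigr)=:\det(I-M).
\]
From $0\le K_f|_{P\cup Q}\le I$ one gets that the eigenvalues of $M$ lie in $[0,1)$, whence $\det(I-M)\in(0,1]$ and $1-\det(I-M)\ge1-e^{-\Tr M}$; moreover $(I-K_{PP})^{-1},(I-K_{QQ})^{-1}\ge I$ yield $\Tr M\ge\Tr(K_{QP}K_{PQ})=\|K_{PQ}\|_{\mathrm{HS}}^{2}$. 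Letting $N\to\infty$ (the trace is monotone in $N$), inserting the pair count and using $n-\ell+1\ge n/(\ell+1)$ gives $\Tr M\ge\frac1{\ell+1}\sum_{n>\ell}|n||\widehat f(n)|^{2}$, i.e.\ \eqref{lower-bdd}. If $f\notin H^{1/2}$ the right-hand side equals $1$ for every $\ell$, so $\mu_f$ is not $\psi$-mixing; this is the asserted necessity of $f\in H^{1/2}$.

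For sufficiency I would first reduce the supremum in \eqref{def-psi-n} to cylinder events. The supremum over $A\in\mathcal F_{-m}^{0}$, $B\in\mathcal F_{\ell}^{m'}$ is non-decreasing in $m,m'$ with limit $\psi_{\mu_f}(\ell)$, and for fixed finite windows the map $(A,B)\mapsto\mu_f(A\cap B)/(\mu_f(A)\mu_f(B))$ is a weighted average over atoms and so attains its extrema on atoms. Thus it suffices to bound, uniformly over finite windows $P,Q$ and configurations $\omega=(\omega_P,\omega_Q)$, the ratio of configurational probabilities. Using the Janossy-type identity $\mu_f(x|_W=\omega)=(-1)^{|E_\omega|}\det(K_W-P_{E_\omega})$, where $E_\omega$ is the empty set of $\omega$ and $P_{E_\omega}$ the corresponding coordinate projection, the signs cancel and the Schur complement gives
\[
\frac{\mu_f(x|_P=\omega_P,\ x|_Q=\omega_Q)}{\mu_f(x|_P=\omega_P)\,\mu_f(x|_Q=\omega_Q)}=\det(I-\Theta),\qquad \Theta=D^{-1}K_{QP}A^{-1}K_{PQ},
\]
with $A=K_{PP}-P_{E_{\omega_P}}$ and $D=K_{QQ}-P_{E_{\omega_Q}}$.

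The crux is to control the indefinite matrices $A$ and $D$, and this is exactly where the hypothesis $\tau\le f\le1-\tau$ is used: it gives $\tau I\le K_f\le(1-\tau)I$, so on any finite window $\langle Av,v\rangle\ge\tau\|v\|^{2}$ when $v$ is supported on occupied sites and $\langle Av,v\rangle\le-\tau\|v\|^{2}$ when $v$ is supported on empty sites. By min--max every eigenvalue of the self-adjoint matrix $A$ has modulus $\ge\tau$, hence $\|A^{-1}\|\le\tau^{-1}$, and likewise $\|D^{-1}\|\le\tau^{-1}$. Factoring the trace norm through the Hilbert--Schmidt norm then yields, uniformly in the windows,
\[
\|\Theta\|_{1}\le\|D^{-1}\|\,\|A^{-1}\|\,\|K_{PQ}\|_{\mathrm{HS}}^{2}\le\frac1{\tau^{2}}\sum_{n>\ell}|n||\widehat f(n)|^{2},
\]
and the standard inequality $|\det(I-\Theta)-1|\le\|\Theta\|_{1}\exp(1+\|\Theta\|_{1})$ gives \eqref{upper-bound}, which tends to $0$ since $f\in H^{1/2}$. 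I expect the two genuinely non-routine points to be the bound $\|A^{-1}\|\le\tau^{-1}$ for the \emph{indefinite} configurational matrix $K_W-P_{E_\omega}$ (the step that consumes the two-sided control of $f$) and the reduction of the $\psi$-coefficient to determinantal ratios of cylinders; the determinant manipulations and the pair counting are routine.
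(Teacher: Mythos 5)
Your proposal is correct and follows essentially the same route as the paper: reduction of $\psi_{\mu_f}(\ell)$ to ratios over cylinder atoms, the Schur-complement identity expressing each ratio as $\det(I-\Theta)$ with $\Theta$ built from the inverses of the configurational matrices $K_W-P_{E_\omega}$ and the off-diagonal block, the bound $|\det(I-\Theta)-1|\le\|\Theta\|_{1}\exp(1+\|\Theta\|_{1})$ from Simon, and the Hilbert--Schmidt pair count giving the tail $\sum_{n>\ell}|n|\,|\widehat f(n)|^{2}$ (together with $\det(I-M)\le e^{-\Tr M}$ and $(I-K)^{-1}\ge I$ for the lower bound). The only, immaterial, deviations are that you test the lower bound on hole configurations where the paper uses the all-ones word $\epsilon^*=(1,\dots,1)$ (the two are exchanged by $f\leftrightarrow 1-f$), and that you obtain $\|A^{-1}\|\le\tau^{-1}$ by a min--max argument on the occupied/empty splitting where the paper writes $A=\tfrac12\big(T_N(2f-1)+D_N(2\epsilon-1)\big)$ and invokes the perturbation-of-a-unitary lemma.
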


\begin{remark}
Shirai and Takahashi \cite{ST-palm2} implicitly proved that the condition \eqref{suff-cond} is sufficient for the $\psi$-mixing property of $\mu_f$. In full generality, it is unclear to the authors when a sationary determinantal point process is $\psi$-mixing.
\end{remark}

\subsection{Correlation dimensions}

Let $(X,d)$ be a metric space equipped with a Borel  probability  measure $\mu$.    For any real number $q>1$, we define the lower and upper $L^q$-dimensions respectively by the formulae:
\begin{align}\label{def-q-dim}
\begin{split}
\underline{\dim}_q\mu:&=\frac{1}{q-1}\liminf_{r\to 0}{\log \int_X \mu(B(x,r))^{q-1} d\mu(x) \over \log r},
\\
\overline{\dim}_q\mu: & =\frac{1}{q-1}\limsup_{r\to 0}{\log \int_X \mu(B(x,r))^{q-1} d\mu(x) \over \log r}.
\end{split}
\end{align}
If  $\underline{\dim}_q\mu=\overline{\dim}_q\mu$, then the common value, denoted by $\dim_q\mu$, is called the {\it $L^q$-dimension} of $\mu$. When $q = 2$,  the quantity $\dim_2\mu$ is also called the  {\it correlation dimension}  of $\mu$.  The relation between the $L^q$-dimensions and other dimensions, such as Hausdorff dimension, of a measure were investigated by Fan, Lau and Rao \cite{FLR-dim}. For more details concerning  correlation dimensions, see Pesin's Book \cite[Chapter 6]{Pesin-book97}.

We  will study the $L^p$-dimension of the stationary determinantal measures $\mu_f^{+}$ on the metric space $(\{0, 1\}^\N, d)$, where $d$ is defined by
\begin{align}\label{def-metric}
d(x, y) = 2^{- \min \{ n\ge 0 | x_n \ne y_n \}}, \quad \forall x, y \in \{0, 1\}^\N.
\end{align}
Since the possible values of the metric \eqref{def-metric} are of the form $2^{-N}$, for any $q > 1$,  we define
\begin{align}\label{def-S-N}
S_N^{(q)}(\mu_f^{+}):=\int_{\{0, 1\}^\N} \mu_f^{+}(B(x,2^{-N}))^{q-1} d\mu_f^{+}(x).
\end{align}

\begin{proposition}\label{prop-sub-m}
Let $f: \T \to [0,1]$ be a Borel function such that either $f\leq \frac{1}{2}$ or $f\geq \frac{1}{2}$. Then for any integer  $q\in \N$ with $q \ge 2$, the function
\[
\N \ni N \mapsto  S_N^{(q)}(\mu_f^{+})
\]
is sub-multiplicative, that is, for any integers $M, N\ge 1$, we have
\[
 S_{M + N}^{(q)} (\mu_f^{+}) \le    S_{M}^{(q)} (\mu_f^{+}) \cdot  S_{N}^{(q)}(\mu_f^{+}).
\]
\end{proposition}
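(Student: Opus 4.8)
The plan is to first rewrite $S_N^{(q)}$ combinatorially and then recast sub-multiplicativity as a negative-correlation statement for $q$ independent copies of the process. Since the metric \eqref{def-metric} takes values in $\{2^{-n}\}$, the ball $B(x,2^{-N})$ is the cylinder fixing the coordinates $x_0,\dots,x_{N-1}$; writing $[w]$ for the cylinder determined by a word $w\in\{0,1\}^N$, this gives
\[
S_N^{(q)}(\mu_f^{+})=\sum_{w\in\{0,1\}^N}\mu_f^{+}([w])^{q}.
\]
As $q$ is an integer, the right-hand side is the probability that $q$ independent samples from $\mu_f^{+}$ coincide on the window $[0,N)$. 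Every cylinder of length $M+N$ involves only the sites $E:=[0,M+N)$, so one may work inside the finite determinantal process on $E$ with kernel $K_E:=K_f\restriction_{E\times E}$, whose Toeplitz (convolution) structure makes the law of the block $[M,M+N)$ coincide with that of $[0,N)$. I would set $P=[0,M)$, $Q=[M,M+N)$ and, for the $q$-fold product, introduce the agreement events $E_1=\{\text{all }q\text{ copies agree on }P\}$ and $E_2=\{\text{all }q\text{ copies agree on }Q\}$. Then $\mathbb{P}(E_1\cap E_2)=S_{M+N}^{(q)}$, $\mathbb{P}(E_1)=S_M^{(q)}$, and $\mathbb{P}(E_2)=S_N^{(q)}$, so the assertion is exactly the negative correlation
\[
\mathbb{P}(E_1\cap E_2)\le \mathbb{P}(E_1)\,\mathbb{P}(E_2).
\]

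Before attacking this I would dispose of the case $f\ge\frac12$ by particle–hole duality: the hole process of $\mu_f^{+}$ is determinantal with kernel $I-K_f$, of symbol $1-f\le\frac12$, and a global bit-flip fixes each agreement event while sending each cylinder to its complementary cylinder; hence the two hypotheses are equivalent here and I may assume $f\le\frac12$, i.e. $0\le K_E\le\frac12 I$. To prove the negative correlation I would condition on the common configuration of the first block. Writing $A_u$ (resp. $B_v$) for the event that $P$ (resp. $Q$) carries the word $u$ (resp. $v$),
\[
\mathbb{P}(E_1\cap E_2)=\sum_{u\in\{0,1\}^M}\mu_f^{+}(A_u)^{q}\sum_{v\in\{0,1\}^N}\mu_f^{+}(B_v\mid A_u)^{q},
\]
in which the inner sum is $S_N^{(q)}$ for the conditional kernel $\widetilde K^{(u)}$ on $Q$ produced by the conditioning formulas of Shirai--Takahashi \cite{ST-palm2} (a Schur complement of $K_E$ against $P$). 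The target inequality thus becomes $\sum_u \mu_f^{+}(A_u)^q\, S_N^{(q)}(\widetilde K^{(u)})\le \big(\sum_u\mu_f^{+}(A_u)^q\big)\,S_N^{(q)}(K_Q)$. It is essential to note that this cannot be obtained term by term: already for a single site $Q=\{M\}$ one has $S_1^{(q)}(K)=p^q+(1-p)^q$, which is strictly decreasing on $[0,\frac12]$, so a word $u$ that depresses the conditional occupation below its mean yields $S_1^{(q)}(\widetilde K^{(u)})>S_1^{(q)}(K_Q)$; the inequality survives only after averaging against the weights $\mu_f^{+}(A_u)^q$.

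The main obstacle is therefore this averaged determinantal inequality, and this is precisely where $K_E\le\frac12 I$ enters. My plan is to expand the signed determinants $\mu_f^{+}(A_u\cap B_v)=(-1)^{|S|}\det(K_E-I_{S})$, with $S$ the zero-set of $uv$, along the off-diagonal correlation blocks $K_{PQ},K_{QP}$ coupling $P$ and $Q$, separating the product-of-marginals part from the correlation part; the former is designed to reproduce $\mathbb{P}(E_1)\,\mathbb{P}(E_2)$ exactly, after which the correlation contribution must be shown to be non-positive. The spectral bound $\|K_E\|\le\frac12$ controls the size of $K_{PQ}$ against the distance of the diagonal from $\frac12$ and forces the correct sign. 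This is already visible in the two-site model $E=\{0,1\}$ with $q=2$, where a direct computation collapses $S_M^{(q)}S_N^{(q)}-S_{M+N}^{(q)}$ to
\[
2c^2\bigl(2s-1-2c^2\bigr),\qquad s=p^2+(1-p)^2,
\]
$p$ and $c$ being the diagonal and the single correlation entry; this is non-negative because $K_E\le\frac12 I$ forces $p+|c|\le\frac12$, hence $c^2\le (p-\tfrac12)^2\le\tfrac12(2s-1)$. Carrying this sign analysis through for general blocks and general integer $q\ge 2$—most cleanly in the $L$-ensemble picture, where $f\le\frac12$ is equivalent to $0\le L:=K_E(I-K_E)^{-1}\le I$—is the technical heart of the argument.
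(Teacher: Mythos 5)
Your setup is correct and well motivated: the identity $S_N^{(q)}(\mu_f^{+})=\sum_{w}\mu_f^{+}([w])^{q}$, the reformulation of sub-multiplicativity as a negative-correlation inequality $\PP(E_1\cap E_2)\le \PP(E_1)\PP(E_2)$ for agreement events of $q$ independent copies, the particle--hole reduction to $f\le\frac12$, and especially your observation that the inequality \emph{cannot} be proved term by term after conditioning on the first block (the one-site counterexample is right). The two-site, $q=2$ computation is also correct: with $s=p^2+(1-p)^2$ one indeed gets $S_1^{(2)}S_1^{(2)}-S_2^{(2)}=2c^2\bigl((2p-1)^2-2c^2\bigr)\ge 0$ under $p+|c|\le\frac12$. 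However, the proof has a genuine gap, and it is exactly where you say it is: the ``averaged determinantal inequality'' for general blocks $P$, $Q$ and general integer $q\ge 2$ is never proved. You describe a plan (expand the signed determinants along the off-diagonal blocks $K_{PQ},K_{QP}$, isolate a product-of-marginals part, show the remainder has a sign) but give no mechanism that produces this sign: after raising to the $q$-th power and summing over pairs of words, the cross terms are sums of products of $q$ signed determinants, and nothing in the proposal organizes them with a definite sign. A verification in the smallest nontrivial case plus the statement that carrying it through ``is the technical heart of the argument'' is precisely the part of the proof that is missing.

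For comparison, the paper's proof avoids conditioning and the Schur-complement kernels altogether, and its use of the hypothesis is different from your spectral bound $0\le K_E\le\frac12 I$. Writing $\theta=2\epsilon-1$ and $g=2f-1\ge 0$ (after the same duality reduction, but to $f\ge\frac12$), one has $\mu_f^{+}([\epsilon])=2^{-N}\det\bigl(I+D_N(\theta)T_N(g)\bigr)$, and the expansion \eqref{det1+L} gives $\det\bigl(I+D_N(\theta)T_N(g)\bigr)=\sum_{J\subset[N]}w_J\,a_J$ with $a_J=\det T_J(g)$ and $w_J$ the Walsh functions \eqref{def-Walsh}. Hence
\begin{align*}
S_N^{(q)}(\mu_f^{+})=2^{-(q-1)N}\sum_{J_1,\dots,J_q\subset[N]}c(J_1,\dots,J_q)\,a_{J_1}\cdots a_{J_q},
\end{align*}
where $c(J_1,\dots,J_q)=\E_\theta[w_{J_1}\cdots w_{J_q}]\in\{0,1\}$. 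The hypothesis enters only to force $a_J\ge 0$ (non-negative definiteness of $T_J(g)$); then Fischer's inequality \eqref{Fischer-inq} gives $a_{J'\sqcup J''}\le a_{J'}a_{J''}$ \emph{term by term} across the splitting $[M+N]=[M]\sqcup([N]+M)$, the Walsh correlations factor across disjoint blocks by independence, and Toeplitz translation invariance gives $a_{J''}=a_{J''-M}$. Since every coefficient is non-negative, these termwise inequalities sum to $\Sigma_{M+N}^{(q)}\le\Sigma_M^{(q)}\Sigma_N^{(q)}$. In other words, the paper's positivity structure (non-negative coefficients in the Walsh expansion plus Fischer) is the ingredient your plan lacks; if you want to salvage your conditioning approach you would in effect need to rediscover something equivalent to it.
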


As a corollary of Proposition \ref{prop-sub-m}, we have
\begin{theorem}\label{thm-CD}
Let $f: \T \to [0,1]$ be a Borel function such that either $f\leq \frac{1}{2}$ or $f\geq \frac{1}{2}$. Then for any integer  $q\in \N$ with $q \ge 2$,  the $L^q$-dimension $\dim_q\mu_f^{+}$ of the determinantal measure $\mu_f^{+}$ on the metric space $(\{0, 1\}^\N, d)$ exists.
\end{theorem}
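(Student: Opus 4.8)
The plan is to derive Theorem~\ref{thm-CD} from Proposition~\ref{prop-sub-m} by a direct application of Fekete's subadditive lemma, after first reducing the continuous limit in the definition of the $L^q$-dimension to the discrete sequence $(S_N^{(q)}(\mu_f^{+}))_N$. Since the metric $d$ in \eqref{def-metric} takes only the values $2^{-N}$, the ball $B(x,2^{-N})$ coincides with the length-$N$ cylinder determined by $(x_0,\dots,x_{N-1})$; indeed $d(x,y)\le 2^{-N}$ holds precisely when $x_n=y_n$ for all $n<N$. Consequently, for every radius $r\in(2^{-(N+1)},2^{-N}]$ one has $\mu_f^{+}(B(x,r))=\mu_f^{+}(B(x,2^{-N}))$, so the $\liminf$ and $\limsup$ over $r\to 0$ appearing in \eqref{def-q-dim} equal the corresponding $\liminf$ and $\limsup$ over $N\to\infty$ of $\frac{\log S_N^{(q)}(\mu_f^{+})}{-N\log 2}$. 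Thus it suffices to prove that $\lim_{N\to\infty}\frac{1}{N}\log S_N^{(q)}(\mu_f^{+})$ exists.

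Next I would set $a_N:=\log S_N^{(q)}(\mu_f^{+})$. Taking logarithms in the submultiplicative inequality of Proposition~\ref{prop-sub-m} gives $a_{M+N}\le a_M+a_N$ for all $M,N\ge 1$, that is, the sequence $(a_N)_{N\ge 1}$ is subadditive. By Fekete's lemma the limit $\lim_{N\to\infty} a_N/N$ exists and equals $\inf_{N\ge 1} a_N/N$, a priori in $[-\infty,+\infty)$. This already yields $\underline{\dim}_q\mu_f^{+}=\overline{\dim}_q\mu_f^{+}$, which is the existence asserted in the theorem.

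To guarantee that the limit is finite (so that $\dim_q\mu_f^{+}$ is a genuine finite dimension rather than $+\infty$), I would record the elementary lower bound $S_N^{(q)}(\mu_f^{+})\ge 2^{-(q-1)N}$. Writing $S_N^{(q)}(\mu_f^{+})=\sum_C \mu_f^{+}(C)^q$, the sum over the at most $2^N$ cylinders $C$ of length $N$, and applying Jensen's inequality to the convex map $t\mapsto t^q$ on the probability vector $(\mu_f^{+}(C))_C$, the minimum of $\sum_C \mu_f^{+}(C)^q$ over vectors with at most $2^N$ entries summing to $1$ is attained at the uniform vector, giving $\sum_C \mu_f^{+}(C)^q\ge 2^N(2^{-N})^q=2^{-(q-1)N}$. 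Together with $S_N^{(q)}(\mu_f^{+})\le 1$, this bounds $a_N/N\in[-(q-1)\log 2,\,0]$, so the Fekete limit is finite, and dividing by $-(q-1)\log 2$ yields $\dim_q\mu_f^{+}\in[0,1]$.

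All the analytic content, namely the positivity and contractivity of the kernel together with the sign condition on $f$, is already packaged into the submultiplicativity of Proposition~\ref{prop-sub-m}, so no serious obstacle remains at this stage. The only points requiring care are the bookkeeping identification of balls with cylinders and the finiteness of the Fekete limit. I expect the cylinder identification to be the most error-prone step, since one must check against the convention in \eqref{def-metric} that $B(x,2^{-N})$ is the length-$N$ cylinder; but any such indexing choice affects only a bounded shift and not the existence of the limit.
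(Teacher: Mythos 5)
Your proposal is correct and follows essentially the same route as the paper: identify $B(x,2^{-N})$ with the length-$N$ cylinder so that the $L^q$-dimension is governed by $\log S_N^{(q)}(\mu_f^{+})/(N\log(1/2))$, then apply Fekete's subadditive lemma to $a_N=\log S_N^{(q)}(\mu_f^{+})$, whose subadditivity is exactly Proposition~\ref{prop-sub-m}. Your additional observations (the reduction of general radii $r\in(2^{-(N+1)},2^{-N}]$ to the dyadic sequence, and the Jensen bound $2^{-(q-1)N}\le S_N^{(q)}(\mu_f^{+})\le 1$ ensuring the Fekete limit is finite) are correct refinements of details the paper leaves implicit.
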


\begin{remark}
We conjecture that Theorem \ref{thm-CD} holds in full generality: that is, it holds for all $f: \T\rightarrow [0, 1]$ and all  real numbers $q > 1$. In a forthcoming paper,  we show that for a real analytic function $f$ such that $\tau \le f \le 1- \tau$ for some $\tau \in (0, 1/2)$, the $L^q$-dimensions of $\mu_f$ exist for all  real numbers $q>1$.
\end{remark}
We have the following upper and lower estimates for the correlation dimensions.
\begin{proposition}
Let $f: \T \to [0,1]$ be a Borel function. Then we have
\[\underline{\dim}_2\mu_f^{+}\geq  \frac{1}{ \log 2} \int_{0}^{1}\log \left( \frac{2}{1+(   2f(e^{i 2 \pi t}) -1 )^2}\right) dt,\]
and
\[\overline{\dim}_2\mu_f^{+}\leq 1- \frac{1}{ \log 2} \int_{0}^{1}\log \left( \frac{[1+\beta (2f(e^{i 2 \pi t}) -1)]^2}{1+\beta^2}\right) d t,  \quad \forall \beta \in [-1,1]. \]
\end{proposition}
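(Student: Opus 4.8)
The plan is to reduce both bounds to a single algebraic identity for $S_N^{(2)}(\mu_f^{+})$ and then feed the resulting determinants into Szeg\H{o}-type asymptotics. Write $g(t) := 2f(t)-1 \in [-1,1]$, let $K_N := (\widehat{f}(i-j))_{0\le i,j\le N-1}$ be the Toeplitz matrix that is the kernel of the determinantal marginal $(\xi_0,\dots,\xi_{N-1})$ of $\mu_f^{+}$, and set $M := I - 2K_N$. Then $M$ is itself the Toeplitz matrix with symbol $1-2f = -g$, and it is a real symmetric contraction because $0\le K_N\le I$. Since $B(x,2^{-N})$ is, up to a harmless shift of $N$, the length-$N$ cylinder containing $x$, the map $x\mapsto \mu_f^{+}(B(x,2^{-N}))$ is constant on each cylinder, so $S_N^{(2)}(\mu_f^{+}) = \sum_{w\in\{0,1\}^N}\mu_f^{+}([w])^2$; by definition $\underline{\dim}_2\mu_f^{+} = -\tfrac{1}{\log 2}\limsup_N \tfrac1N\log S_N^{(2)}$ and $\overline{\dim}_2\mu_f^{+} = -\tfrac{1}{\log 2}\liminf_N \tfrac1N\log S_N^{(2)}$.

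First I would prove the key identity
\[
2^N\, S_N^{(2)}(\mu_f^{+}) \;=\; \sum_{A\subseteq\{0,\dots,N-1\}} \det(M_A)^2 ,
\]
where $M_A$ is the principal submatrix of $M$ on $A$. Indeed, writing $\epsilon_i := 1-2\xi_i\in\{\pm 1\}$ and letting $\xi'$ be an independent copy, one has $\mathbf 1(\xi_i=\xi_i') = \tfrac12(1+\epsilon_i\epsilon_i')$, so $S_N^{(2)}=\PP(\xi=\xi') = 2^{-N}\,\E\prod_i(1+\epsilon_i\epsilon_i')$. Expanding the product over subsets $A$ and using independence gives $2^N S_N^{(2)} = \sum_A \big(\E\prod_{i\in A}\epsilon_i\big)^2$; the determinantal generating-function identity $\E\prod_{i\in A}(1-2\xi_i) = \det\big((I-2K_N)_A\big) = \det(M_A)$ then yields the claim.

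The two estimates now follow by sandwiching this sum of squares of principal minors between two genuine determinants. For the lower bound on $\underline{\dim}_2$, Cauchy--Binet gives
\[
\det(I+MM^{\top}) = \det(I+M^2) = \sum_{|A|=|B|}\det(M_{A,B})^2 \;\ge\; \sum_A \det(M_A)^2 ,
\]
and, since the eigenvalues of $M$ lie in $[-1,1]$ and are asymptotically distributed according to the symbol $-g$, the Szeg\H{o}--Avram--Parter eigenvalue distribution theorem applied to the continuous function $F(x)=\log(1+x^2)$ gives $\tfrac1N\log\det(I+M^2)\to\int_0^1\log(1+g(t)^2)\,dt$; combined with $S_N^{(2)}=2^{-N}\sum_A\det(M_A)^2$ this produces exactly the stated lower bound $\tfrac{1}{\log2}\int_0^1\log\frac{2}{1+g^2}\,dt$. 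For the upper bound on $\overline{\dim}_2$, Cauchy--Schwarz with the weights $c_A = (-\beta)^{|A|}$ gives
\[
\sum_A \det(M_A)^2 \;\ge\; \frac{\big(\sum_A c_A\det(M_A)\big)^2}{\sum_A c_A^2} = \frac{\det(I-\beta M)^2}{(1+\beta^2)^N} = \frac{\det T_N(1+\beta g)^2}{(1+\beta^2)^N},
\]
using $\det(I+tM)=\sum_A t^{|A|}\det(M_A)$ and the Toeplitz linearity $I-\beta M = T_N(1+\beta g)$. Because $|\beta|\le 1$ and $|g|\le 1$ the symbol $1+\beta g$ is nonnegative, so the first Szeg\H{o} limit theorem gives $\tfrac1N\log\det T_N(1+\beta g)\to\int_0^1\log(1+\beta g)\,dt$; translating through the definition of $\overline{\dim}_2$ yields the stated bound $1-\tfrac{1}{\log2}\int_0^1\log\frac{(1+\beta g)^2}{1+\beta^2}\,dt$ for every $\beta\in[-1,1]$.

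The heart of the argument is the first step: recognizing the order-two collision sum $\sum_w\mu_f^{+}([w])^2$ as $\sum_A\det(M_A)^2$, and then the observation that although a sum of squares of principal minors is not itself a determinant, it is pinched between the two computable determinants above. The remaining input --- the first Szeg\H{o} limit theorem together with the Szeg\H{o} eigenvalue distribution theorem for bounded Hermitian Toeplitz matrices --- is classical; the only points needing care are the degenerate case in which $1+\beta g$ vanishes on a set of positive measure, so that $\int_0^1\log(1+\beta g)\,dt=-\infty$ and the corresponding bound on $\overline{\dim}_2$ becomes vacuous but remains valid, and the open/closed ball convention, which shifts $N$ by one and does not affect the limits.
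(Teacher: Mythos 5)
Your argument is correct, and its overall skeleton coincides with the paper's: both reduce the collision sum $S_N^{(2)}(\mu_f^{+})$ to the sum of squared principal Toeplitz minors ($\sum_{J\subset[N]}\det^2 T_J(g)$ in the paper's notation, your $\sum_A\det(M_A)^2$ with $M=-T_N(g)$), and your upper bound on $\overline{\dim}_2$ --- Cauchy--Schwarz against the weights $\beta^{|J|}$, then Szeg\H{o}'s first theorem for the nonnegative symbol $1+\beta g$ --- is exactly the paper's Lemma \ref{Upper}. You genuinely diverge in two places. For the key identity, you argue probabilistically: $S_N^{(2)}$ is the collision probability $\PP(\xi_i=\xi_i',\ 0\le i<N)$ for two independent copies, expanded via $\ch(\xi_i=\xi_i')=\tfrac12(1+\epsilon_i\epsilon_i')$ and the generating-function identity $\E\prod_{i\in A}(1-2\xi_i)=\det\bigl((I-2K_N)_A\bigr)$; the paper instead plugs the Shirai--Takahashi cylinder formula \eqref{DPP-f-C} into $\sum_\epsilon\mu_f^{+}([\epsilon])^2$ and uses orthogonality of the Walsh functions $w_J(\theta)=\prod_{j\in J}\theta_j$ (Lemma \ref{lem-q-2}); these are equivalent computations, yours being somewhat more self-contained on the probabilistic side. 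For the lower bound, you pinch $\sum_A\det(M_A)^2\le\det(I+M^2)$ by Cauchy--Binet; since $I+T_N(g)^2$ is \emph{not} a Toeplitz matrix, you must then invoke the Szeg\H{o}--Avram--Parter eigenvalue-distribution theorem with test function $\log(1+x^2)$. The paper instead applies the operator inequality $T_J(g)^2\le T_J(g^2)$ (Lemma \ref{J-square}) termwise, so its comparison object $\sum_J\det T_J(g^2)=\det(I+T_N(g^2))=2^N\det T_N\bigl(\tfrac{1+g^2}{2}\bigr)$ is a genuine Toeplitz determinant, and the first Szeg\H{o} limit theorem --- the only asymptotic input the paper quotes --- suffices. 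So your route trades the matrix inequality of Lemma \ref{J-square} for Cauchy--Binet, at the price of a stronger (though still classical) Toeplitz asymptotic; the two comparison determinants are different matrices but have the same exponential growth rate $\int_0^1\log(1+g^2)\,dt$, so both yield the stated bound. One small repair: since $f$ is only assumed real-valued, $\widehat{f}(n)$ may be complex, so $M=I-2K_N$ is Hermitian rather than real symmetric; you should run Cauchy--Binet as $\det(I+MM^*)=\sum_{|A|=|B|}|\det(M_{A,B})|^2$ with $MM^*=M^2$, noting that principal minors of a Hermitian matrix are real, after which every step goes through unchanged.
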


\section{Preliminaries}

\subsection{Equalities and inequalities of determinants}

The following elementary results on determinants will be useful for us.
\begin{itemize}
\item The Sylvester's determinant identity: if $A$ and $B$ are two matrices such that both matrix-products $AB$ and $BA$ can be defined, then
\begin{align}\label{syl-id}
\det(1-AB)=\det(1-BA).
\end{align}
\item The Fischer's inequality: write a non-negative definite matrix $M$ in the following block form
\[
M = \left[
\begin{array}{cc}
A & B
\\
B^* & C
\end{array}
\right], \quad \text{$A$ and $C$ are square matrices,}
\]
then
\begin{align}\label{Fischer-inq}
\det(M) \le \det (A) \det (C).
\end{align}
\item For any positive integer $N\ge 1$, we denote
\begin{align}\label{def-set-N}
[N]: = \{1, 2, \cdots, N\}.
\end{align}
 For an $N\times N$ matrix $L$ and a subset $J = \{j_1, \cdots, j_n\} \subset [N]$, we denote by $L_J$ the submatrix of $L$ with $j_1, \cdots, j_n$-th rows and columns.  Then the determinant $\det (1 + L)$ can be expanded (cf. e.g., \cite[formulae (2.6) and (2.7)]{ST-palm2}) as
\begin{align}\label{det1+L}
\det(1+L)=\sum_{J\subset[N]}\det L_{J},
\end{align}
where we used the convention $\det L_\emptyset := 1$.
\item Let $A$ be a square matrix such that both $A$ and $1 - A$ non-negative definite, then
\begin{align}\label{det-exp}
\det(1-A)\leq \exp(-\tr(A)).
\end{align}
\end{itemize}

\subsection{Operator-order and operator ideals}

Let $\mathcal{H}$  be a Hilbert space and  let $B(\mathcal{H})$ be the set of bounded linear operators on $\mathcal{H}$. Recall that the operator-order for Hermitian operators is defined as follows: For any two Hermitian operators $A, B \in B(\mathcal{H})$, we write $A\le B$ if $B-A$ is a non-negative operator. Clearly, for any two Hermitian operators $A$ and $B$ with $A\leq B $, we have
  \begin{align}\label{op-geq}
C^*AC\leq C^*BC, \quad \forall C\in B(\mathcal{H}).
\end{align}
For any $A\in B(\mathcal{H})$, we denote by $\| A\|$ its the operator norm. Then  for any self-adjoint $B \in B(\mathcal{H})$, we have
\begin{align}\label{self-bound}
-\|B\|\cdot I \leq B\ \leq \|B\|\cdot I,
\end{align}
where $I$ is the identity operator on $\mathcal{H}$.

We will use some elementary operator ideal inequalities as follows, all the details can be found in Simon \cite[Chapter 1]{Simon-trace}. Denote by $\mathcal{I}_\infty(\mathcal{H})$ the ideal in $B(\mathcal{H})$ consists of compact operators on $\mathcal{H}$. For any $p\ge 1$,  the Schatten-von Neumann operator ideal $\mathcal{I}_p(\mathcal{H})$ is a sub-ideal of $\mathcal{I}_\infty(\mathcal{H})$ consists of operators $A\in \mathcal{I}_\infty(\mathcal{H})$ such that
\begin{align}\label{def-p-norm}
\| A\|_p = \Big(\sum_{n = 1}^\infty [\lambda_n(A^*A)]^{p/2} \Big)^{1/p}<\infty,
\end{align}
where $(\lambda_n(A^*A))_{n\in \N}$ is the sequence (counting multiplicities) of the eigenvalues of the compact operator $A^*A$.The quantity \eqref{def-p-norm} defines a norm on the linear space $\mathcal{I}_p(\mathcal{H})$ and making it a Banach space. The following operator ideal inequalities (for $p =1$ and $p =2$) will be useful for us.  Let $C\in \mathcal{I}_p(\mathcal{H})$ and $ B, B^\prime\in B(\mathcal{H})$. Then
\begin{align}\label{p-norm}
\|BCB^{\prime}\|_p\leq \|B\|\cdot\|C\|_p\cdot\|B^{\prime}\|.
\end{align}
The non-commutative H\"older inequality says that if $\frac{1}{p} = \frac{1}{q} + \frac{1}{r}$ with $p, q, r \ge 1$, then for any $A\in \mathcal{I}_q(\mathcal{H})$ and $B\in \mathcal{I}_r(\mathcal{H})$, we have (cf., Simon \cite[Theorem 2.8]{Simon-trace})
\begin{align}\label{nc-holder}
\| AB\|_p \le \| A\|_q \| B\|_r.
\end{align}
If $A$ is a non-negative operator, then $\| A\|_1 = \tr(A)$.  For $p =2$, the norm $\|A\|_2$ is called the Hilbert-Schmidt norm and will also be denoted as $\| A\|_{HS}$.

The following  elementary results on the non-negative definite matrices will be usefulf for us. For the reader's convenience,  we include  the proofs.
\begin{lemma}\label{1-norm}
Let $A, B$ be two  $N\times N$ Hermitian matrices such that $A$ is non-negative definite and $-A\leq B\leq A$,
then $\|B\|_1\leq  \|A\|_1.$
\end{lemma}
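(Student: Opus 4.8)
The plan is to reduce everything to the identity $\|A\|_1 = \tr(A)$, which holds because $A$ is non-negative definite, together with a trace expression for $\|B\|_1$, and then to exploit the two-sided bound $-A \le B \le A$ through the monotonicity of the trace against non-negative operators.

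First I would diagonalize the Hermitian matrix $B$ and write $\|B\|_1 = \tr|B| = \tr(P_+ B) - \tr(P_- B)$, where $P_+$ and $P_-$ denote the orthogonal projections onto the spans of the eigenvectors of $B$ associated with strictly positive and strictly negative eigenvalues, respectively. Equivalently, $\|B\|_1 = \tr(P_+ B) + \tr\big(P_-(-B)\big)$, a sum of two non-negative quantities.

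The key step is then the elementary monotonicity principle: if $P$ is a non-negative operator and $X \le Y$, then $\tr(PX) \le \tr(PY)$, since $\tr\big(P(Y-X)\big) = \tr\big(P^{1/2}(Y-X)P^{1/2}\big) \ge 0$. Applying this with $P = P_+$ and the inequality $B \le A$ gives $\tr(P_+ B) \le \tr(P_+ A)$; applying it with $P = P_-$ and the inequality $-B \le A$ (which is just the left half of $-A \le B \le A$) gives $\tr\big(P_-(-B)\big) \le \tr(P_- A)$. Adding the two estimates yields $\|B\|_1 \le \tr\big((P_+ + P_-)A\big)$.

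Finally, since $P_+ + P_-$ is an orthogonal projection we have $P_+ + P_- \le I$, so that $\tr\big((I - P_+ - P_-)A\big) \ge 0$ by the same monotonicity principle applied to $A \ge 0$; hence $\tr\big((P_+ + P_-)A\big) \le \tr(A) = \|A\|_1$. Chaining these inequalities gives $\|B\|_1 \le \|A\|_1$, as desired. I do not expect any genuine obstacle here; the only point requiring slight care is the treatment of the kernel of $B$, but this is automatic because $P_+ + P_-$ is exactly the projection onto the range of $B$ and so contributes nothing spurious.
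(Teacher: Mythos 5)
Your proof is correct, and it takes a genuinely different route from the paper's. The paper first reduces to invertible $A$ by a small perturbation ($A \mapsto A+\varepsilon I$), observes that the hypothesis $-A \le B \le A$ is equivalent to the operator-norm bound $\| A^{-1/2} B A^{-1/2}\| \le 1$, and then estimates
\[
\| B\|_1 = \big\| A^{1/2}\big(A^{-1/2} B A^{-1/2}\big) A^{1/2}\big\|_1 \le \| A^{1/2}\|_2 \cdot \| A^{-1/2} B A^{-1/2}\| \cdot \| A^{1/2}\|_2 \le \tr(A)
\]
using the non-commutative H\"older inequality \eqref{nc-holder} and the ideal property \eqref{p-norm}. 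You instead split $\| B\|_1 = \tr(P_+ B) + \tr\big(P_-(-B)\big)$ via the spectral projections of $B$ and apply twice the monotonicity of $X \mapsto \tr(PX)$ for $P \ge 0$ (once with $B \le A$, once with $-B \le A$), then finish with $P_+ + P_- \le I$ and $A \ge 0$. Your argument is more elementary: it requires no perturbation (hence no invertibility of $A$), no square roots of $A$, and none of the Schatten-ideal machinery, only the spectral theorem for $B$ and the fact that $\tr(PX) \ge 0$ when $P, X \ge 0$; your handling of $\ker B$ via $P_+ + P_- \le I$ is also exactly right. What the paper's approach buys is economy within its own framework --- it recycles the operator-ideal inequalities already stated in the preliminaries and is phrased in a way that carries over verbatim to trace-class operators on an infinite-dimensional Hilbert space --- but for the finite-dimensional statement actually being proved, your proof is self-contained and arguably cleaner.
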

\begin{proof}
By the standard argument of small perturbation, that is, replacing $A$ by  $A + \varepsilon I$ for arbitrarily  small $\varepsilon > 0$, we may assume without loss of generality that $A$ is invertible. By \eqref{op-geq}, the assumption $-A \le B \le A$ is equivalent to $
-I \le A^{-1/2} B A^{-1/2} \le I$,  which is in turn equivalent to the operator-norm inequality $\| A^{-1/2}B A^{-1/2}\| \le 1$.
Therefore, by first applying \eqref{nc-holder} and then applying  \eqref{p-norm},   we get
\begin{align*}
\| B\|_1 &  = \| A^{1/2} (A^{-1/2} B A^{-1/2}) A^{1/2}\|_1  \le \| A^{1/2} (A^{-1/2} B A^{-1/2})\|_2 \cdot \| A^{1/2}\|_2 \\
& \le \| A^{1/2} \|_2 \cdot \| A^{-1/2} B A^{-1/2}\| \cdot \| A^{1/2}\|_2 \le \| A^{1/2}\|_2^2 = \tr(A) = \| A\|_1.
\end{align*}
\end{proof}

\begin{lemma}\label{lem-pd}
Let  $A, C$ be two $N\times N$ square matrices. Assume that $A$ is invertible and non-negative definite.  Then
\[
 \left[
\begin{array}{cc}
A & C
\\
C^* & A
\end{array}
\right] \ge 0 \Longleftrightarrow  A^{-1/2} C^* A^{-1} C A^{-1/2} \le I.
\]
\end{lemma}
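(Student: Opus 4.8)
The plan is to reduce the block positivity to a block-diagonal form by a congruence (Schur-complement) transformation, and then to clear the off-diagonal blocks by conjugating with $A^{-1/2}$. The guiding principle throughout is that congruence by an \emph{invertible} operator preserves the operator order in both directions: the forward implication is exactly \eqref{op-geq}, and the reverse implication follows by applying \eqref{op-geq} to the inverse of the conjugating operator. Note first that since $A$ is invertible and non-negative definite, it is in fact positive definite, so $A^{1/2}$ and $A^{-1/2}$ are well defined.

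For the first step I would introduce the invertible block-lower-triangular matrix
\[
T = \begin{pmatrix} I & 0 \\ -C^* A^{-1} & I \end{pmatrix},
\qquad
M = \begin{pmatrix} A & C \\ C^* & A \end{pmatrix},
\]
and compute the congruence $T M T^*$. Using $A^* = A$, so that $T^* = \bigl(\begin{smallmatrix} I & -A^{-1}C \\ 0 & I \end{smallmatrix}\bigr)$, a direct block multiplication gives the Schur-complement identity
\[
T M T^* = \begin{pmatrix} A & 0 \\ 0 & A - C^* A^{-1} C \end{pmatrix}.
\]
Since $T$ is invertible, $M \ge 0$ if and only if $T M T^* \ge 0$, and a block-diagonal Hermitian matrix is non-negative definite if and only if each diagonal block is. As $A \ge 0$ holds by hypothesis, this yields the equivalence $M \ge 0 \Longleftrightarrow A - C^* A^{-1} C \ge 0$.

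For the second step I would conjugate the Schur complement by the invertible matrix $A^{-1/2}$. Invoking again the two-sided congruence invariance, the inequality $A - C^* A^{-1} C \ge 0$ is equivalent to
\[
A^{-1/2}\bigl(A - C^* A^{-1} C\bigr)A^{-1/2} = I - A^{-1/2} C^* A^{-1} C A^{-1/2} \ge 0,
\]
that is, to $A^{-1/2} C^* A^{-1} C A^{-1/2} \le I$. Chaining the two equivalences completes the proof. The computations here are entirely routine; the only points requiring care are the verification of the block identity for $T M T^*$ and the justification that congruence by an invertible matrix preserves positivity in \emph{both} directions, which is why I apply \eqref{op-geq} to both $T$ (resp.\ $A^{-1/2}$) and its inverse. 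I do not expect any substantial obstacle beyond this bookkeeping.
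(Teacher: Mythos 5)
Your proof is correct and rests on essentially the same idea as the paper's: reducing the block matrix to block-diagonal form by invertible congruences (a triangular Schur-complement step together with conjugation by $A^{-1/2}$), the only difference being the order — the paper first normalizes both diagonal blocks to $I$ and then clears the off-diagonal blocks, while you take the Schur complement of $A$ first and conjugate by $A^{-1/2}$ afterwards. Both routes hinge on the same observation that congruence by an invertible matrix preserves non-negativity in both directions, so there is no gap.
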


\begin{proof}
 The result follows immediately by observing the following equalities
\[
\left[
\begin{array}{cc}
A^{-1/2} & 0
\\
0 & A^{-1/2}
\end{array}
\right]
\left[
\begin{array}{cc}
A & C
\\
C^* & A
\end{array}
\right]     \left[
\begin{array}{cc}
A^{-1/2} & 0
\\
0 & A^{-1/2}
\end{array}
\right]
=
\left[
\begin{array}{cc}
I & A^{-1/2} C A^{-1/2}
\\
A^{-1/2} C^* A^{-1/2} & I
\end{array}
\right]
\]
and by setting $M = A^{-1/2} C A^{-1/2}$, we have
\[
\left[
\begin{array}{cc}
I & 0
\\
-M^* & I
\end{array}
\right]
\left[
\begin{array}{cc}
I & M
\\
M^* & I
\end{array}
\right]
\left[
\begin{array}{cc}
I & - M
\\
0 & I
\end{array}
\right]
= \left[
\begin{array}{cc}
I & 0
\\
0 & I - M^* M
\end{array}
\right].
\]
\end{proof}

\section{The $\psi$-mixing property}
In this section, we shall investigate the $\psi$-mixing property of the  stationary determinantal measure $\mu_f$.

\subsection{Notation}

 In what follows, we shall use the following notation: let $N\ge 1$ be a positive integer.
\begin{itemize}
\item For any finite word $\epsilon= \epsilon_1\epsilon_2\cdots\epsilon_{N} \in \{0,1\}^N$, define the $N$-cylinder set
\begin{align}\label{def-cyl}
[\epsilon]:=\Big\{ x   = (x_k)_{k\in \Z}\in \{0,1\}^{\Z}\Big| x_1=\epsilon_1, x_{2}=\epsilon_2, \cdots, x_{N}=\epsilon_{N}\Big\}.
\end{align}
\item For any subset $S\subset \{0, 1\}^N$, let $A_S\subset \{0, 1\}^\Z$ be the subset defined by
\begin{align}\label{def-A-S}
A_S: = \bigsqcup_{\epsilon \in S}[\epsilon].
\end{align}
\item For any vector $v \in \C^N$, we write $D_N(v)$ the diagonal matrix with diagonal entries the coordiantes of $v$. For instance,  we will use the notation: for any $\epsilon, \epsilon' \in \{0, 1\}^N$, we write $D_{2N}(\epsilon, \epsilon')$ for the diagonal matrix
\[
D_{2N} (\epsilon, \epsilon') =
\left[
\begin{array}{cc}
D_N(\epsilon) & 0
\\
0 & D_N(\epsilon')
\end{array}
\right]
\]
\item For a bounded measurable function  $\phi: \T\rightarrow \C$,  let $T(\phi)$ be the bounded operator on $\ell^2(\Z)$ corresponding to the bi-infinite  Toeplitz matrix
\begin{align}\label{def-T-phi}
T(\phi)=\Big[ \widehat{\phi}(i-j) \Big]_{i, j \in \Z}.
\end{align}
 For any subset  $J\subset \Z$, let $T_{J}(\phi)$ be the operator on $\ell^2(J)$ corresponding to the matrix
\begin{align}\label{def-T-J}
T_J(\phi)=\Big[\widehat{\phi}(i-j)\Big]_{i, j \in J}.
\end{align}
In particular, if $J = [N] = \{1, \cdots, N\}$,  the Toeplitz matrix $T_J(\phi)$ will be denoted by $T_N(\phi)$, that is,
\[
T_N(\phi) = \Big[\widehat{f}(i-j)\Big]_{1\le i, j \le N}.
\]
\item For any $\ell\ge 1$, we define a square matrix  $\Lambda_{N, \ell}(\phi)$  by
\begin{align}\label{def-lambda-N}
\Lambda_{N, \ell}(\phi) = \left[ \widehat{\phi} \Big( i - (j + N+\ell)\Big) \right]_{1\le i, j \le N}.
\end{align}
\end{itemize}

\subsection{Proof of $\psi$-mixing property}

Fix an integrable function $f: \T \rightarrow [0, 1]$ such that $f\not\equiv 0 $ and $f\not\equiv 1$. Let $\mu_f$ be  the  stationary determinantal measure on $\{0, 1\}^\Z$.

Recall the definition \eqref{def-sigma-F} of the sigma-algebras $\mathcal{F}_n^m$ and the definition \eqref{def-psi-n} for $\psi_{\mu_f}$.
\begin{lemma}[{\cite[Theorem 4.2]{Lyons-stationary}}]\label{lem-Lyons}
The assumption  $f\not\equiv 0 $ and $f\not\equiv 1$ implies that $\mu_f(A)>0$ for any non-empty set   $A\in \mathcal{F}_{m}^{m+\ell}$.
In particular, for any integer $N\ge 1$, the matrix $T_N(f)$ is invertible.
\end{lemma}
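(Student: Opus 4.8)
The plan is to reduce the statement to a finite-dimensional positivity fact about the compressed kernel $T_N(f)$ and then exploit the determinantal structure. First I would note that, by stationarity of $\mu_f$, it suffices to treat events in $\mathcal{F}_1^N$: a translate carries $\mathcal{F}_m^{m+\ell}$ onto $\mathcal{F}_1^{\ell+1}$ while preserving $\mu_f$, and the compression of the kernel \eqref{def-kernel} to any window of length $N$ is exactly the Toeplitz matrix $T_N(f)$, since $\widehat{f}(i-j)$ depends only on $i-j$. Any nonempty $A\in\mathcal{F}_1^N$ is a finite disjoint union of $N$-cylinders $[\epsilon]$, so it is enough to show $\mu_f([\epsilon])>0$ for every $\epsilon\in\{0,1\}^N$. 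Writing $S=\{i\in[N]:\epsilon_i=1\}$, the probability of $[\epsilon]$ is a determinantal expression in $T_N(f)$, and the whole argument hinges on the two strict operator inequalities $0 < T_N(f) < I$.

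The analytic heart is establishing these strict inequalities from the hypotheses $f\not\equiv 0$ and $f\not\equiv 1$. For $\xi\in\C^N\setminus\{0\}$ put $p(t)=\sum_{k=1}^N \xi_k e^{-2\pi i k t}$; unwinding \eqref{def-kernel} gives
\[
\langle T_N(f)\xi,\xi\rangle=\int_0^1 f(t)\,|p(t)|^2\,dt,\qquad \langle (I-T_N(f))\xi,\xi\rangle=\int_0^1 (1-f(t))\,|p(t)|^2\,dt.
\]
Since $p$ is a nonzero trigonometric polynomial it vanishes on a set of Lebesgue measure zero, so $|p|^2>0$ almost everywhere. Hence the first integral can vanish only if $f\equiv 0$ and the second only if $f\equiv 1$; both are excluded, whence $0 < T_N(f) < I$. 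In particular $T_N(f)$ is invertible, which is the last assertion of the lemma.

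Finally I would convert strict positivity into positivity of cylinder probabilities. Because $0 < T_N(f) < I$, the finite determinantal process on $[N]$ with correlation kernel $T_N(f)$ is an $L$-ensemble: with $L:=T_N(f)\,(I-T_N(f))^{-1}$, which is positive definite, one has $\mu_f([\epsilon])=\det(L_S)/\det(I+L)$, an identity derivable from the expansion \eqref{det1+L} together with inclusion--exclusion over the empty sites. As $L$ is positive definite, every principal submatrix $L_S$ is positive definite, so $\det(L_S)>0$, while $\det(I+L)=\det(I-T_N(f))^{-1}>0$; hence $\mu_f([\epsilon])>0$ for all $\epsilon$, and summing over cylinders yields $\mu_f(A)>0$. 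The genuine idea of the proof is the strict positivity of the previous paragraph, while the main technical obstacle lies in this last step: one must justify the exact determinantal formula for the probability of a prescribed finite pattern — equivalently the Schur-complement factorization $\mu_f([\epsilon])=\det(T_N(f)_{SS})\cdot\det\big((I-T_N(f))_{EE}+T_N(f)_{ES}\,(T_N(f)_{SS})^{-1}T_N(f)_{SE}\big)$ with $E=[N]\setminus S$ — and verify that the strict inequalities force each factor to be strictly positive; the surrounding reductions are then formal.
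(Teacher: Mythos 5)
Your proof is correct, but there is nothing in the paper to compare it against line by line: the paper does not prove Lemma \ref{lem-Lyons} at all, it simply imports it as \cite[Theorem 4.2]{Lyons-stationary}. What you have produced is a self-contained substitute for that citation, and it works. The analytic core is sound: for $\xi\neq 0$ one has $\langle T_N(f)\xi,\xi\rangle=\int_0^1 f|p|^2\,dt$ with $p$ a nonzero trigonometric polynomial, which vanishes at only finitely many points, so the integral is zero only if $f=0$ a.e.; likewise for $I-T_N(f)$ and $1-f$. (Note this forces you to read ``$f\not\equiv 0$, $f\not\equiv 1$'' modulo null sets --- the only reading under which the lemma can be true, since $\mu_f$ depends only on the a.e.\ class of $f$ --- and it delivers the invertibility assertion immediately, rather than as a corollary of full support as the wording ``in particular'' of the statement suggests.) For the second half you need not treat the pattern-probability formula as an obstacle: the paper itself records it as Proposition \ref{prop-ST} (quoted from Shirai--Takahashi), it requires no invertibility hypothesis, and there is no circularity in invoking it. Writing $S=\{i:\epsilon_i=1\}$ and $E=[N]\setminus S$, a simultaneous permutation of rows and columns turns $D_N(2\epsilon-1)T_N(f)+D_N(1-\epsilon)$ into the block matrix with blocks $T_N(f)_{SS}$, $T_N(f)_{SE}$, $-T_N(f)_{ES}$, $(I-T_N(f))_{EE}$, and your Schur-complement factorization (the paper's Lemma \ref{lem-ABCD}) gives
\[
\mu_f([\epsilon])=\det\big(T_N(f)_{SS}\big)\cdot\det\Big((I-T_N(f))_{EE}+T_N(f)_{ES}\,(T_N(f)_{SS})^{-1}\,T_N(f)_{SE}\Big)>0,
\]
since $T_N(f)_{ES}=T_N(f)_{SE}^{*}$ makes the second factor the determinant of a positive-definite plus a positive-semidefinite matrix (the degenerate cases $S=\emptyset$ and $S=[N]$ give $\det(I-T_N(f))>0$ and $\det T_N(f)>0$ directly). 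Summing over the cylinders contained in a nonempty $A$ and using shift-invariance finishes the argument. Compared with the paper's citation, your route is longer but buys a proof using only ingredients already present in the paper (Proposition \ref{prop-ST} and elementary linear algebra); the intermediate $L$-ensemble detour with $L=T_N(f)(I-T_N(f))^{-1}$ is also correct, but it becomes redundant once the Schur-complement step is in place.
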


Clearly, since the measure $\mu_f$ is invariant under the shift operator  $\sigma$  on $\{0, 1\}^\Z$, by Lemma \ref{lem-Lyons}  and approximation, we have the following simple lemma.

\begin{lemma}\label{lem-approx}
For any integer $\ell \ge 1$, we have
\[
\psi_{\mu_f}(\ell)=\sup_{N\geq 1}\sup_{A, B\in \mathcal{F}_{1}^{N} }\left|\frac{\mu_f(A\cap \sigma^{-(N+\ell)} B)}{
\mu_f(A)\mu_f(B)}-1\right|.
\]
\end{lemma}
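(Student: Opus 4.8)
The plan is to prove the identity by establishing two inequalities, both driven by the shift-invariance of $\mu_f$ together with the positivity guaranteed by Lemma~\ref{lem-Lyons}; the only genuinely analytic input is a standard approximation argument on the generating algebras of $\mathcal{F}_{-\infty}^0$ and $\mathcal{F}_\ell^{+\infty}$.

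\emph{The inequality ``$\le$''.} Fix $N\ge 1$ and non-empty $A, B\in\mathcal{F}_1^N$ (an empty set contributes nothing to the supremum). Applying a suitable power of $\sigma$ and using $\mu_f\circ\sigma^{-1}=\mu_f$, I would translate the three quantities $\mu_f(A)$, $\mu_f(B)$ and $\mu_f(A\cap\sigma^{-(N+\ell)}B)$ into $\mu_f(\widetilde A)$, $\mu_f(\widetilde B)$ and $\mu_f(\widetilde A\cap\widetilde B)$, where $\widetilde A$ is a translate of $A$ lying in $\mathcal{F}_{-\infty}^0$ and $\widetilde B$ is a translate of $\sigma^{-(N+\ell)}B$ lying in $\mathcal{F}_\ell^{+\infty}$; the block of coordinates separating $A$ from $\sigma^{-(N+\ell)}B$ is exactly what places $\widetilde B$ in the future $\sigma$-algebra. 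Since $\widetilde A,\widetilde B$ are non-empty finite unions of cylinders, Lemma~\ref{lem-Lyons} gives $\mu_f(\widetilde A),\mu_f(\widetilde B)>0$, so the quotient is an admissible competitor in the supremum defining $\psi_{\mu_f}(\ell)$. Taking the supremum over $N$, $A$ and $B$ yields ``$\le$''.

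\emph{The inequality ``$\ge$''.} Let $A\in\mathcal{F}_{-\infty}^0$ and $B\in\mathcal{F}_\ell^{+\infty}$ with $\mu_f(A),\mu_f(B)>0$. The families $\bigcup_{k\ge 0}\mathcal{F}_{-k}^0$ and $\bigcup_{k\ge 0}\mathcal{F}_\ell^{\ell+k}$ are algebras generating $\mathcal{F}_{-\infty}^0$ and $\mathcal{F}_\ell^{+\infty}$ respectively, so I would choose cylinder-type events $A_j, B_j$ in these algebras with $\mu_f(A\triangle A_j)\to 0$ and $\mu_f(B\triangle B_j)\to 0$. Then $\mu_f(A_j)\to\mu_f(A)>0$, $\mu_f(B_j)\to\mu_f(B)>0$ and $\mu_f(A_j\cap B_j)\to\mu_f(A\cap B)$, the last via $\mu_f((A_j\cap B_j)\triangle(A\cap B))\le\mu_f(A_j\triangle A)+\mu_f(B_j\triangle B)$. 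Since the denominators stay bounded away from $0$ for large $j$, the normalized ratios converge to $\mu_f(A\cap B)/(\mu_f(A)\mu_f(B))$. Finally, each finite ratio $\mu_f(A_j\cap B_j)/(\mu_f(A_j)\mu_f(B_j))$ can, by a further application of shift-invariance, be rewritten in the normal form $\mu_f(\widehat A\cap\sigma^{-(N+\ell)}\widehat B)/(\mu_f(\widehat A)\mu_f(\widehat B))$ with $\widehat A,\widehat B\in\mathcal{F}_1^N$ for $N$ sufficiently large, and hence is dominated by the right-hand side. Passing to the limit in $j$ and then taking the supremum over $A, B$ gives ``$\ge$''.

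The main obstacle is the second inequality: one must verify that the normalized quotient $\mu_f(A\cap B)/(\mu_f(A)\mu_f(B))$ is continuous under $L^1(\mu_f)$-approximation of $A$ and $B$, and this is precisely where the positivity of Lemma~\ref{lem-Lyons} is essential, since it prevents the denominators from degenerating and thus keeps the ratio well controlled in the limit. By contrast, the bookkeeping of the shifts — carrying a finitely supported past/future pair into the $\mathcal{F}_1^N$ normal form with exactly the separation encoded by the translation $\sigma^{-(N+\ell)}$ — is routine once the positivity and the approximation are in place.
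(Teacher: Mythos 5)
Your strategy is exactly the one the paper has in mind --- its entire ``proof'' of Lemma \ref{lem-approx} is the one-sentence appeal to shift-invariance, Lemma \ref{lem-Lyons} and approximation --- and most of your write-up is sound: the ``$\le$'' half, the $L^1$-approximation with denominators kept away from $0$, and the role of the positivity from Lemma \ref{lem-Lyons} are all as intended. But the step you dismissed as ``routine bookkeeping'' is precisely where the argument fails as written, because the two sides of the identity use \emph{different} separations. In \eqref{def-psi-n}, $A\in\mathcal{F}_{-\infty}^0$ and $B\in\mathcal{F}_\ell^{+\infty}$ are separated by the $\ell-1$ coordinates $\{1,\dots,\ell-1\}$, whereas on the right-hand side $A\in\mathcal{F}_1^N$ and $\sigma^{-(N+\ell)}B$ (which is $\mathcal{F}_{N+\ell+1}^{2N+\ell}$-measurable) are separated by the $\ell$ coordinates $\{N+1,\dots,N+\ell\}$. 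Consequently, in your ``$\ge$'' direction the rewriting into normal form is impossible: if $A_j\in\mathcal{F}_{-k}^{0}$ and $B_j\in\mathcal{F}_{\ell}^{\ell+k}$, applying a common shift $\sigma^{-s}$ moves their index blocks to $\{s-k,\dots,s\}$ and $\{\ell+s,\dots,\ell+k+s\}$, and you would need both $s\le N$ (to fit the first block inside $\{1,\dots,N\}$) and $\ell+s\ge N+\ell+1$, i.e.\ $s\ge N+1$ (to push the second block past $N+\ell$) --- contradictory whenever $B_j$ genuinely depends on $\xi_\ell$, which it must in general since $B$ may.

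What your two halves actually prove is that the right-hand side equals $\psi_{\mu_f}(\ell+1)$: your translation in the ``$\le$'' direction lands $\sigma^{-\ell}B$ in $\mathcal{F}_{\ell+1}^{+\infty}$, giving $\mathrm{RHS}\le\psi_{\mu_f}(\ell+1)\le\psi_{\mu_f}(\ell)$, and the approximation argument run with $B_j\in\mathcal{F}_{\ell+1}^{\ell+1+k}$ and the shift $s=k+1$, $N=k+1$ gives $\psi_{\mu_f}(\ell+1)\le\mathrm{RHS}$. So the lemma as printed asserts $\psi_{\mu_f}(\ell)=\psi_{\mu_f}(\ell+1)$ for every $\ell$, which cannot be what is intended. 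This is an off-by-one inconsistency in the paper itself between \eqref{def-psi-n} and Lemmas \ref{lem-approx} and \ref{lem-sup}, not a defect of your strategy: either the definition \eqref{def-psi-n} should read $B\in\mathcal{F}_{\ell+1}^{+\infty}$, or $\sigma^{-(N+\ell)}$ should be $\sigma^{-(N+\ell-1)}$, and with either consistent convention your proof goes through verbatim. The slip is harmless for the paper's main results --- the lower bound \eqref{lower-bdd} only uses the true direction $\mathrm{RHS}\le\psi_{\mu_f}(\ell)$, and in \eqref{upper-bound} the summation index merely shifts by one --- but a correct write-up must flag this step rather than call it routine, since it is the one place where the statement itself needs repair.
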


The following lemma is classical, we include its proof for completeness.
\begin{lemma}\label{lem-sup}
For any integer $\ell \ge 1$, we have
\begin{align}\label{psi-f-sup}
\psi_{\mu_f}(\ell) = \sup_{N\ge 1}\sup_{\epsilon, \epsilon'\in \{0, 1\}^N}\left| \frac{ \mu_f([\epsilon] \cap \sigma^{-(N+\ell)}[\epsilon']) }{\mu_f([\epsilon]) \mu_f([\epsilon'])}  - 1   \right|.
\end{align}
\end{lemma}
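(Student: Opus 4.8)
The plan is to reduce the supremum over arbitrary events in $\mathcal{F}_1^N$ supplied by Lemma \ref{lem-approx} to a supremum over individual cylinders, by a convexity (weighted-average) argument. First I would dispose of the easy inequality $\ge$: each cylinder $[\epsilon]$ lies in $\mathcal{F}_1^N$ and is non-empty, hence has positive $\mu_f$-measure by Lemma \ref{lem-Lyons}, so the right-hand side of \eqref{psi-f-sup} is a supremum over a subfamily of the pairs $(A,B)$ admissible in Lemma \ref{lem-approx}, and is therefore $\le \psi_{\mu_f}(\ell)$.

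For the reverse inequality $\le$, fix $N\ge 1$ and non-empty $A,B\in\mathcal{F}_1^N$. Since $\mathcal{F}_1^N$ is generated by $\xi_1,\dots,\xi_N$, I would write $A=A_S$ and $B=A_{S'}$ as disjoint unions of $N$-cylinders indexed by subsets $S,S'\subset\{0,1\}^N$, as in \eqref{def-A-S}. Writing $a_\epsilon:=\mu_f([\epsilon])$, $b_{\epsilon'}:=\mu_f([\epsilon'])$ and $c_{\epsilon,\epsilon'}:=\mu_f([\epsilon]\cap\sigma^{-(N+\ell)}[\epsilon'])$, the key structural observation is that the coordinate blocks $\{1,\dots,N\}$ and $\{N+\ell+1,\dots,2N+\ell\}$ carving out $[\epsilon]$ and $\sigma^{-(N+\ell)}[\epsilon']$ are disjoint for $\ell\ge 1$, so that the families $\{[\epsilon]\}_{\epsilon\in S}$ and $\{\sigma^{-(N+\ell)}[\epsilon']\}_{\epsilon'\in S'}$ are disjoint and additivity of $\mu_f$ yields
\[
\mu_f(A)=\sum_{\epsilon\in S}a_\epsilon,\qquad \mu_f(B)=\sum_{\epsilon'\in S'}b_{\epsilon'},\qquad \mu_f(A\cap\sigma^{-(N+\ell)}B)=\sum_{\epsilon\in S}\sum_{\epsilon'\in S'}c_{\epsilon,\epsilon'}.
\]

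The crux is then to exhibit the ratio for $(A,B)$ as a convex combination of the ratios for individual cylinder pairs. Setting $\rho_{\epsilon,\epsilon'}:=c_{\epsilon,\epsilon'}/(a_\epsilon b_{\epsilon'})$ (well defined, since every cylinder has positive measure by Lemma \ref{lem-Lyons}), the nonnegative weights $w_{\epsilon,\epsilon'}:=a_\epsilon b_{\epsilon'}/(\mu_f(A)\mu_f(B))$ sum to $1$ over $(\epsilon,\epsilon')\in S\times S'$, and one checks directly that
\[
\frac{\mu_f(A\cap\sigma^{-(N+\ell)}B)}{\mu_f(A)\mu_f(B)}=\sum_{\epsilon\in S}\sum_{\epsilon'\in S'}w_{\epsilon,\epsilon'}\,\rho_{\epsilon,\epsilon'}
\]
is a weighted average of the $\rho_{\epsilon,\epsilon'}$. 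Consequently it lies between $\min_{S\times S'}\rho_{\epsilon,\epsilon'}$ and $\max_{S\times S'}\rho_{\epsilon,\epsilon'}$, so its distance to $1$ is bounded by $\max_{(\epsilon,\epsilon')\in S\times S'}|\rho_{\epsilon,\epsilon'}-1|\le \sup_{\epsilon,\epsilon'}|\rho_{\epsilon,\epsilon'}-1|$. Taking the supremum over $A,B$ and then over $N$ gives the inequality $\le$, and combined with the first paragraph this proves \eqref{psi-f-sup}. I do not expect a genuine obstacle here; the argument is the standard fact that $|\text{average}-1|$ is dominated by the worst pointwise deviation $|\rho_{\epsilon,\epsilon'}-1|$, and the only points demanding care are recording that every non-empty cylinder has positive measure (so all ratios are defined) and verifying the coordinate-block disjointness underlying the additivity formulas.
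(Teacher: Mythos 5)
Your proposal is correct and follows essentially the same route as the paper: decompose $A=A_S$, $B=A_{S'}$ into cylinders, then bound the deviation of the ratio from $1$ by writing it as a weighted average with weights $\mu_f([\epsilon])\mu_f([\epsilon'])/(\mu_f(A_S)\mu_f(A_{S'}))$ and applying the triangle inequality, which is exactly the paper's computation of $\psi_N^{S,S'}(\ell)$. Your explicit treatment of the easy direction $\ge$ and of positivity of cylinder measures via Lemma \ref{lem-Lyons} only makes explicit what the paper leaves implicit.
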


\begin{proof}
Clearly, by Lemma \ref{lem-approx}, we only need to show that the LHS of \eqref{psi-f-sup} is not greater than the RHS of \eqref{psi-f-sup}.

Recall the definition  \eqref{def-A-S} of the subset $A_S\subset \{0, 1\}^\Z$. Clearly, all subsets in $\mathcal{F}_1^N$ are of the form $A_S$ for some subset $S\subset \{0, 1\}^N$.  Therefore,  in view of Lemma \ref{lem-approx}, we shall calculate  the following quantity:  for any integers $N\ge 1, \ell \ge 1$ and any pair $S, S' \subset \{0, 1\}^N$,
\begin{align}\label{def-psi-S-S}
\psi_{N}^{S,S^{\prime}}(\ell): =\left|\frac{\mu_f(A_S\cap \sigma^{-(N+\ell)} A_{S^{\prime}})}{
\mu_f(A_S)\mu_f(A_{S^{\prime}})}-1\right|.
\end{align}
By the definition for the subsets $A_S$ and $A_{S'}$, we have
\[
\mu_f(A_S\cap \sigma^{-(N+\ell)} A_{S^{\prime}})  = \sum_{\epsilon \in S,  \epsilon' \in S'}  \mu_f([\epsilon] \cap \sigma^{-(N+\ell)}[\epsilon']), \quad \mu_f(A_S)  = \sum_{\epsilon \in S}  \mu_f([\epsilon]).
\]
Therefore, we obtain
\begin{align*}
\psi_{N}^{S,S^{\prime}}(\ell): & =\frac{1}{
\mu_f(A_S)\mu_f(A_{S^{\prime}})}\left|  \sum_{\epsilon \in S,  \epsilon' \in S'}  \Big(  \mu_f([\epsilon] \cap \sigma^{-(N+\ell)}[\epsilon'])   - \mu_f([\epsilon]) \mu_f([\epsilon']) \Big)  \right|
\\
 & \le  \frac{1}{
\mu_f(A_S)\mu_f(A_{S^{\prime}})} \sum_{\epsilon \in S,  \epsilon' \in S'}   \mu_f([\epsilon]) \mu_f([\epsilon']) \left| \frac{ \mu_f([\epsilon] \cap \sigma^{-(N+\ell)}[\epsilon']) }{\mu_f([\epsilon]) \mu_f([\epsilon'])}  - 1   \right|
\\
& \le \frac{1}{
\mu_f(A_S)\mu_f(A_{S^{\prime}})}  \left( \sum_{\epsilon \in S,  \epsilon' \in S'}   \mu_f([\epsilon]) \mu_f([\epsilon']) \right) \cdot \sup_{\epsilon, \epsilon'\in \{0, 1\}^N}\left| \frac{ \mu_f([\epsilon] \cap \sigma^{-(N+\ell)}[\epsilon']) }{\mu_f([\epsilon]) \mu_f([\epsilon'])}  - 1   \right|
\\
& = \sup_{\epsilon, \epsilon'\in \{0, 1\}^N}\left| \frac{ \mu_f([\epsilon] \cap \sigma^{-(N+\ell)}[\epsilon']) }{\mu_f([\epsilon]) \mu_f([\epsilon'])}  - 1   \right|.
\end{align*}
This completes the proof of the lemma.
\end{proof}

We now focus on estimation of the following ratio:
\begin{align}\label{def-R-N}
R_{N,\ell}^{f}(\epsilon,\epsilon^{\prime}):=\frac{\mu_f([\epsilon]\cap  \sigma^{-(N+\ell)}[\epsilon^{\prime}])}{\mu_f([\epsilon])\mu_f([\epsilon^{\prime}])}, \quad \epsilon, \epsilon' \in \{0, 1\}^N.
\end{align}

We shall need the explicit formula for the weight of a determinantal measure on the cylinder set.  The equalities in Proposition \eqref{prop-ST} are just another way of writing for a formula in Shirai-Takahashi \cite[formula (2.1)]{ST-palm2}.

\begin{proposition}[{\cite[formula (2.1)]{ST-palm2}}]\label{prop-ST}
Fix an integer $N\ge 1$. Then for any $\epsilon \in \{0,1\}^{N}$,
\begin{align}\label{DPP-f-C}
\mu_f([\epsilon])=\det\Big(D_{N}(2\epsilon-1)T_{N}(f)+D_{N}(1-\epsilon)\Big)
\end{align}
and  for any $\epsilon,\epsilon^{\prime}\in \{0,1\}^{N}$ and any integer $\ell \ge 1$,
\begin{align}\label{DPP-f-C-bis}
\mu_{f}\Big([\epsilon]\cap  \sigma^{-(N+\ell)} [\epsilon']\Big)=
\det\Big(D_{2N}(2\epsilon-1,2\epsilon^{\prime}-1)T_{J_{N, \ell}}(f)+D_{2N}(1-\epsilon,1-\epsilon^{\prime})\Big),
\end{align}
where $J_{N, \ell}$ is  the ordered set of cardinality $2N$ defined by
\[
J_{N, \ell} = \{1, \cdots, N\} \cup \{ N + \ell + 1, \cdots, N+ \ell + N\}.
\]
In particular, using the notation \eqref{def-T-J} and \eqref{def-lambda-N}, the matrix  $T_{J_{N, \ell}}(f)$  can be written in the following block form:
\begin{align}\label{block-form-T-J}
T_{J_{N, \ell}}(f) = \left[
\begin{array}{cc}
T_{N}(f) &  \Lambda_{N,\ell}(f)
\\
\Lambda_{N,\ell}(f)^* & T_{N}(f)
\end{array}
 \right].
\end{align}
\end{proposition}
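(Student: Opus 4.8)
The plan is to deduce both displayed identities from a single master formula valid for an arbitrary finite window, which is precisely the content of the Shirai--Takahashi formula \cite[formula (2.1)]{ST-palm2}; the equalities \eqref{DPP-f-C} and \eqref{DPP-f-C-bis} will then be nothing but this master formula applied to the two windows $[N]$ and $J_{N,\ell}$, followed by reading off the block structure \eqref{block-form-T-J}.

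First I would fix a finite ordered window $\Lambda\subset\Z$ and write $K:=T_\Lambda(f)$ for the restriction \eqref{def-T-J} of the Toeplitz kernel. For a prescribed pattern $\eta\in\{0,1\}^\Lambda$, the event ``$x_i=\eta_i$ for all $i\in\Lambda$'' has indicator $\prod_{i\in\Lambda}\big((1-\eta_i)+(2\eta_i-1)\xi_i\big)$, since the $i$-th factor equals $\xi_i$ when $\eta_i=1$ and $1-\xi_i$ when $\eta_i=0$. Expanding this product over subsets $S\subseteq\Lambda$, using $\xi_i\in\{0,1\}$ and the defining correlation identity $\E\big[\prod_{i\in S}\xi_i\big]=\det K_S$, the weight of the cylinder becomes
\[
\sum_{S\subseteq\Lambda}\Big(\prod_{i\notin S}(1-\eta_i)\Big)\Big(\prod_{i\in S}(2\eta_i-1)\Big)\det K_S .
\]
The key algebraic step is to recognize this subset sum as one determinant: the $i$-th row of $D_\Lambda(2\eta-1)K+D_\Lambda(1-\eta)$ is $(1-\eta_i)e_i+(2\eta_i-1)(\text{$i$-th row of }K)$, and expanding the determinant multilinearly in its rows reproduces exactly the sum above --- the rows with $i\notin S$ contribute the diagonal factor $(1-\eta_i)$ and collapse the determinant to the principal submatrix $K_S$ with sign $+1$, while the rows with $i\in S$ contribute the factor $\prod_{i\in S}(2\eta_i-1)$. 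This yields the master formula
\[
\mu_f\big(\{x_i=\eta_i,\,i\in\Lambda\}\big)=\det\big(D_\Lambda(2\eta-1)\,T_\Lambda(f)+D_\Lambda(1-\eta)\big).
\]

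Specializing to $\Lambda=[N]$ and $\eta=\epsilon$ gives \eqref{DPP-f-C}. For \eqref{DPP-f-C-bis} I would note that $[\epsilon]\cap\sigma^{-(N+\ell)}[\epsilon']$ is exactly the cylinder prescribing $x_i=\epsilon_i$ for $i\in\{1,\dots,N\}$ and $x_{N+\ell+j}=\epsilon'_j$ for $j\in\{1,\dots,N\}$, i.e. the master formula for the ordered window $\Lambda=J_{N,\ell}$ with pattern $\eta=(\epsilon,\epsilon')$, which produces the matrix $D_{2N}(2\epsilon-1,2\epsilon'-1)T_{J_{N,\ell}}(f)+D_{2N}(1-\epsilon,1-\epsilon')$. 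It then remains to read off the block form \eqref{block-form-T-J} from \eqref{def-T-J}: the two diagonal blocks equal $T_N(f)$ because $\widehat{f}\big((N+\ell+i')-(N+\ell+j')\big)=\widehat{f}(i'-j')$, the top-right block is $\Lambda_{N,\ell}(f)$ by the definition \eqref{def-lambda-N}, and the bottom-left block is its adjoint, where I use that $f$ is real-valued so that $\overline{\widehat{f}(n)}=\widehat{f}(-n)$ and hence the kernel is Hermitian.

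The only genuine point requiring care, and the step I expect to be the main obstacle to present cleanly, is the multilinear expansion identifying the subset sum with a single determinant: one must verify that each vacant site $i\notin S$ contributes precisely the diagonal factor $(1-\eta_i)$, restricts the remaining determinant to the rows and columns in $S$, and does so with sign $+1$. Everything else amounts to bookkeeping with the index set $J_{N,\ell}$ and to invoking the reality of $f$ for the Hermitian symmetry.
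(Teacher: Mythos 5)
Your proposal is correct, but it does something the paper does not: it actually proves the formula. The paper offers no argument for Proposition \ref{prop-ST} at all --- it simply remarks that the two identities are ``another way of writing'' Shirai--Takahashi's formula (2.1) in \cite{ST-palm2} and leaves the verification to that citation. Your route is the self-contained one: you derive the master identity
\[
\mu_f\big(\{x_i=\eta_i,\ i\in\Lambda\}\big)=\det\big(D_\Lambda(2\eta-1)T_\Lambda(f)+D_\Lambda(1-\eta)\big)
\]
directly from the defining correlation identity $\E\big[\prod_{i\in S}\xi_i\big]=\det K_S$, by expanding the indicator $\prod_{i\in\Lambda}\big((1-\eta_i)+(2\eta_i-1)\xi_i\big)$ over subsets and matching the resulting sum, term by term, with the row-multilinear expansion of the determinant. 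The delicate point you flag --- that each vacant row $(1-\eta_i)e_i$ collapses the determinant onto the principal submatrix $K_S$ with sign $+1$ --- is handled correctly: Laplace expansion along a row whose only nonzero entry sits on the diagonal carries sign $(-1)^{i+i}=+1$, so iterating over $i\notin S$ yields exactly $\prod_{i\notin S}(1-\eta_i)\prod_{i\in S}(2\eta_i-1)\det K_S$. The specializations to $\Lambda=[N]$ and $\Lambda=J_{N,\ell}$, and the identification of the block form \eqref{block-form-T-J} (including the use of $\overline{\widehat{f}(n)}=\widehat{f}(-n)$ for real $f$ to get the adjoint in the lower-left block), are all as the paper intends. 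What your approach buys is that the proposition no longer rests on an external reference and its translation of notation; what the paper's citation buys is brevity, at the cost of leaving the reader to reconcile the conventions of \cite{ST-palm2} with those of the present paper.
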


\begin{proposition}\label{prop-ratio}
For any integers $N\ge 1, \ell \ge 1$ and any $\epsilon, \epsilon' \in \{0, 1\}^N$, we have
\begin{align}\label{R-det-H}
R_{N,\ell}^{f}(\epsilon,\epsilon^{\prime}) = \det \Big( I - H_{N, \ell}^f( \epsilon, \epsilon')\Big),
\end{align}
where $H_{N, \ell}^f( \epsilon, \epsilon')$ is defined by
\begin{align}\label{def-H-N}
H^{f}_{N,\ell}(\epsilon,\epsilon^{\prime}):=\Big[T_{N}(f)+D_{N}(\epsilon^{\prime}-1)\Big]^{-1}\Lambda_{N,\ell}(f)^{*}\Big[T_{N}(f)+D_{N}(\epsilon-1)\Big]^{-1}\Lambda_{N,\ell}(f).
\end{align}
\end{proposition}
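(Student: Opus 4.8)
The plan is to feed the explicit determinantal weights from Proposition \ref{prop-ST} into a Schur complement computation and then perform one diagonal-sign simplification. Abbreviate $M_\epsilon := D_N(2\epsilon-1)T_N(f)+D_N(1-\epsilon)$, so that \eqref{DPP-f-C} reads $\mu_f([\epsilon]) = \det(M_\epsilon)$ and likewise $\mu_f([\epsilon']) = \det(M_{\epsilon'})$. Inserting the block form \eqref{block-form-T-J} of $T_{J_{N,\ell}}(f)$ into \eqref{DPP-f-C-bis} and carrying out the diagonal multiplications, the $2N\times 2N$ matrix whose determinant equals $\mu_f([\epsilon]\cap\sigma^{-(N+\ell)}[\epsilon'])$ becomes
\[
\begin{bmatrix}
M_\epsilon & D_N(2\epsilon-1)\Lambda_{N,\ell}(f)
\\
D_N(2\epsilon'-1)\Lambda_{N,\ell}(f)^* & M_{\epsilon'}
\end{bmatrix}.
\]
Since $f\not\equiv 0$ and $f\not\equiv 1$, Lemma \ref{lem-Lyons} gives $\det(M_\epsilon) = \mu_f([\epsilon]) > 0$, so $M_\epsilon$ is invertible and the block-determinant identity $\det\begin{bmatrix} A & B \\ C & D\end{bmatrix} = \det(A)\det(D-CA^{-1}B)$ applies with $A = M_\epsilon$. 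Dividing through by $\det(M_\epsilon)\det(M_{\epsilon'})$ then yields
\[
R_{N,\ell}^f(\epsilon,\epsilon') = \det\Big(I - M_{\epsilon'}^{-1}D_N(2\epsilon'-1)\Lambda_{N,\ell}(f)^* M_\epsilon^{-1}D_N(2\epsilon-1)\Lambda_{N,\ell}(f)\Big).
\]

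The decisive step is then to recognize that $M_\epsilon^{-1}D_N(2\epsilon-1) = [T_N(f)+D_N(\epsilon-1)]^{-1}$. This rests on the entrywise identity $(2\epsilon_i-1)(\epsilon_i-1) = 1-\epsilon_i$ valid for $\epsilon_i\in\{0,1\}$, i.e.\ $D_N(2\epsilon-1)D_N(\epsilon-1) = D_N(1-\epsilon)$, from which multiplying out gives $D_N(2\epsilon-1)\big[T_N(f)+D_N(\epsilon-1)\big] = M_\epsilon$. As $\det(D_N(2\epsilon-1)) = \pm 1$, this factorization simultaneously shows that $T_N(f)+D_N(\epsilon-1)$ is invertible and, upon inverting, that $M_\epsilon^{-1}D_N(2\epsilon-1) = [T_N(f)+D_N(\epsilon-1)]^{-1}$; the same holds for $\epsilon'$. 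Substituting both identities into the determinant above collapses it precisely into $\det(I - H_{N,\ell}^f(\epsilon,\epsilon'))$ with $H_{N,\ell}^f(\epsilon,\epsilon')$ as in \eqref{def-H-N}, which is the claim.

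I expect no conceptual obstacle here: the statement is an exact finite-dimensional linear-algebra identity, with the Schur complement and the sign relation $D_N(2\epsilon-1)D_N(\epsilon-1)=D_N(1-\epsilon)$ doing all the work. The only point demanding care is the bookkeeping --- confirming that every required inverse exists (uniformly guaranteed by Lemma \ref{lem-Lyons}) and tracking the order of the two factors $\Lambda_{N,\ell}(f)^*$, $\Lambda_{N,\ell}(f)$ against the two distinct diagonal corrections $D_N(\epsilon'-1)$ and $D_N(\epsilon-1)$ so that they occupy exactly the positions dictated by \eqref{def-H-N}.
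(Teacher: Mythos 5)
Your proof is correct and takes essentially the same route as the paper: you form the same block matrix with corners $M_\epsilon$, $M_{\epsilon'}$ and off-diagonal blocks $D_N(2\epsilon-1)\Lambda_{N,\ell}(f)$, $D_N(2\epsilon'-1)\Lambda_{N,\ell}(f)^*$, apply the block-determinant (Schur complement) identity which is precisely the paper's Lemma \ref{lem-ABCD}, and then absorb the diagonal sign matrices via the same elementary entrywise identities the paper invokes. Your explicit verification that $M_\epsilon$, and hence $T_N(f)+D_N(\epsilon-1)$, is invertible via Lemma \ref{lem-Lyons} is a bookkeeping point the paper leaves implicit, but the argument is the same.
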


The following elementary identity for determinants will be used.
\begin{lemma}\label{lem-ABCD}
Let $A, B, C, D$ be $N\times N$ matrices such that  both $A$ and $D$ are invertible. Then
\[
\frac{\det
\left[
\begin{array}{cc}
A& B
\\
C & D
\end{array}
\right]
}{\det ( A ) \det (D)}   = \det \Big( I - D^{-1} C A^{-1}  B \Big).
\]
\end{lemma}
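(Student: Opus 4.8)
The plan is to reduce the identity to the Schur complement formula by exhibiting an explicit block-triangular factorization, in the same spirit as the factorizations used in the proof of Lemma \ref{lem-pd}. Since $A$ is invertible, I would first write
\[
\left[ \begin{array}{cc} A & B \\ C & D \end{array} \right]
=
\left[ \begin{array}{cc} I & 0 \\ C A^{-1} & I \end{array} \right]
\left[ \begin{array}{cc} A & B \\ 0 & D - C A^{-1} B \end{array} \right],
\]
which one verifies by direct block multiplication: the $(2,1)$-block of the product is $C A^{-1} A = C$, the $(2,2)$-block is $C A^{-1} B + (D - C A^{-1} B) = D$, and the first block-row reproduces $A$ and $B$ unchanged.

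Next I would take determinants of both sides. The first factor is block lower-triangular with identity diagonal blocks, so its determinant is $1$; the second factor is block upper-triangular, so by the block-triangular determinant rule (the determinant of a block-triangular matrix with square diagonal blocks is the product of the determinants of those blocks) its determinant is $\det(A) \det(D - C A^{-1} B)$. This yields the Schur complement formula $\det \left[ \begin{array}{cc} A & B \\ C & D \end{array} \right] = \det(A) \det(D - C A^{-1} B)$. Finally, using that $D$ is invertible, I would factor $D - C A^{-1} B = D \bigl( I - D^{-1} C A^{-1} B \bigr)$ and apply multiplicativity of the determinant to get $\det(D - C A^{-1} B) = \det(D)\det\bigl(I - D^{-1} C A^{-1} B\bigr)$. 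Dividing through by $\det(A)\det(D)$, both nonzero by hypothesis, gives exactly the claimed formula.

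There is no genuine difficulty here; the only points requiring care are the block-triangular determinant rule (which can be justified by the same kind of factorization or by cofactor expansion) and the bookkeeping of on which side each inverse appears, so that the result reads $D^{-1} C A^{-1} B$ rather than $A^{-1} B D^{-1} C$. As a sanity check, performing the Schur reduction with respect to $D$ instead would produce $\det\bigl(I - A^{-1} B D^{-1} C\bigr)$, and these two expressions agree by Sylvester's identity \eqref{syl-id} applied with the factors $D^{-1} C$ and $A^{-1} B$, confirming internal consistency of the statement.
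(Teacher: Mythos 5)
Your proof is correct in every detail: the block factorization is verified accurately, the block-triangular determinant rule and the factorization $D - CA^{-1}B = D\bigl(I - D^{-1}CA^{-1}B\bigr)$ are applied correctly, and the Sylvester cross-check against the other Schur reduction is sound. The paper itself states Lemma \ref{lem-ABCD} without any proof, treating it as an elementary identity, so your Schur-complement argument simply supplies the missing justification---and it does so in the same block-factorization style the paper uses explicitly in the proof of Lemma \ref{lem-pd}.
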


\begin{proof}[Proof of Proposition \ref{prop-ratio}]
Using the notation in Proposition \ref{prop-ST}, we have
\begin{align*}
&D_{2N}(2\epsilon-1,2\epsilon^{\prime}-1)T_{J_{N, \ell}}(f)+D_{2N}(1-\epsilon,1-\epsilon^{\prime}) = \left[
\begin{array}{cc}
A& B
\\
C & D
\end{array}
\right]
\end{align*}
where
\begin{align*}
A & :=D_{N}(2\epsilon-1)T_{N}(f)+D_{N}(1-\epsilon),
\\
D & :=D_{N}(2\epsilon'-1)T_{N}(f)+D_{N}(1-\epsilon'),
\\
B &=  D_N(2\epsilon-1)   \Lambda_{N,\ell}(f)
\\
 C &= D_N(2\epsilon^{\prime}-1)   \Lambda_{N,\ell}(f)^*
\end{align*}
Therefore, by the equalities  \eqref{DPP-f-C}, \eqref{DPP-f-C-bis} and Lemma \ref{lem-ABCD}, we obtain the desired equality \eqref{R-det-H}.  Note that here we used the identities $D_N(2\epsilon-1)^2 = D_N(2 \epsilon' -1)^2 = I$ to get
\begin{align*}
H^f_{N, \ell} (\epsilon, \epsilon') = D^{-1} C A^{-1}  B.
\end{align*}
\end{proof}

\begin{corollary}\label{Simon-Theorem}
For any integers $N\ge 1, \ell \ge 1$ and any $\epsilon, \epsilon' \in \{0, 1\}^N$, we have
\[\left|R_{N,\ell}^{f}(\epsilon,\epsilon^{\prime})-1\right|\leq \| H^{f}_{N,\ell}(\epsilon,\epsilon^{\prime})\|_1 \cdot \exp(\|H^{f}_{N,\ell}(\epsilon,\epsilon^{\prime})\|_1 +1).\]
\end{corollary}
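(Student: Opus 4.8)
The goal is to bound $|R_{N,\ell}^f(\epsilon,\epsilon')-1|$ where, by Proposition \ref{prop-ratio}, $R_{N,\ell}^f(\epsilon,\epsilon') = \det(I - H^f_{N,\ell}(\epsilon,\epsilon'))$. The plan is to reduce everything to a single quantitative statement about $\det(I-H)$ in terms of the trace norm $\|H\|_1$, and then invoke the determinant formula together with an elementary Lipschitz-type estimate.

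\medskip

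\emph{Step 1: reduce to a determinant inequality.} Writing $H = H^f_{N,\ell}(\epsilon,\epsilon')$ for brevity, the target inequality is
\begin{align}\label{plan-target}
|\det(I - H) - 1| \le \|H\|_1 \cdot \exp(\|H\|_1 + 1).
\end{align}
This is a general fact about finite matrices (or trace-class operators), and I would isolate it as the engine of the proof. The standard tool is the Simon-type bound relating the perturbation determinant to the trace norm: for any trace-class $H$ one has $|\det(I-H)| \le \exp(\|H\|_1)$, and more precisely there is a Lipschitz estimate of the form $|\det(I-H) - \det(I-H')| \le \|H - H'\|_1 \exp(\max(\|H\|_1,\|H'\|_1)+\text{const})$. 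The name of the corollary, ``Simon-Theorem'', signals that the intended reference is Simon's \cite[Chapter 1]{Simon-trace}, which contains exactly such inequalities for Fredholm determinants.

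\medskip

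\emph{Step 2: prove \eqref{plan-target}.} Setting $H' = 0$ in the Lipschitz estimate gives directly $|\det(I-H)-1| \le \|H\|_1 \exp(\|H\|_1 + 1)$, which is precisely the claim. If one prefers a self-contained argument, I would diagonalize (or upper-triangularize) $H$ so that $\det(I-H) = \prod_j (1-\lambda_j)$ with $\sum_j |\lambda_j| \le \|H\|_1$, and then estimate $|\prod_j(1-\lambda_j) - 1|$ by a telescoping/product expansion: using $|1 - \prod_j(1-\lambda_j)| \le \big(\sum_j |\lambda_j|\big)\exp\big(\sum_j |\lambda_j|\big)$, which follows from the elementary inequality $|\prod(1+a_j) - 1| \le \exp(\sum|a_j|) - 1 \le (\sum|a_j|)\exp(\sum|a_j|)$. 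Since $\sum_j|\lambda_j|$ is the sum of absolute values of eigenvalues of $H$, which is bounded by $\|H\|_1 = \sum_j s_j(H)$ (the singular values), this yields the bound with the factor $\exp(\|H\|_1)$; the extra $+1$ in the exponent of \eqref{plan-target} gives room to absorb the passage from eigenvalues to singular values and is harmless since $e^{\|H\|_1+1} \ge e^{\|H\|_1}$.

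\medskip

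\emph{Main obstacle.} The only delicate point is the inequality $\sum_j|\lambda_j(H)| \le \|H\|_1$ relating the eigenvalue sum to the trace norm, since $H = H^f_{N,\ell}(\epsilon,\epsilon')$ is in general \emph{not} self-adjoint (it is a product of non-self-adjoint factors in \eqref{def-H-N}). For non-normal matrices the eigenvalues need not equal the singular values, so the naive product-over-eigenvalues estimate must be justified by Weyl's majorant inequality $\sum_j |\lambda_j(H)| \le \sum_j s_j(H) = \|H\|_1$, which holds for any matrix. Invoking Weyl's inequality (available in \cite[Chapter 1]{Simon-trace}) closes this gap cleanly, and the rest is the routine product estimate above. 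Consequently the cleanest write-up is simply to cite the Fredholm-determinant Lipschitz bound from Simon and specialize it to $H' = 0$.
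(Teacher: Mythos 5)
Your proposal is correct and takes essentially the same route as the paper: the paper's proof is a one-liner combining Proposition \ref{prop-ratio} with Simon's determinant inequality $|\det(1+A)-\det(1+B)|\le \|A-B\|_1\exp(\|A\|_1+\|B\|_1+1)$ (cited as \cite[Theorem 6.5]{Simon-Adv77}) specialized at $B=0$, which is exactly your Step 1 plus the citation route of Step 2. Your optional self-contained argument---Schur triangularization, the elementary estimate $|\prod_j(1-\lambda_j)-1|\le \exp\big(\sum_j|\lambda_j|\big)-1$, and Weyl's majorant inequality $\sum_j|\lambda_j(H)|\le \|H\|_1$ to handle the non-self-adjointness of $H^f_{N,\ell}(\epsilon,\epsilon')$---is also sound and even yields the slightly sharper factor $\exp(\|H\|_1)$, but it is not needed.
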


\begin{proof}
It follows directly from Simon \cite[Theorem 6.5]{Simon-Adv77}.
\end{proof}

The following elementary result is classical.
\begin{lemma}\label{lem-inverse}
For a positive number $\tau \in (0, 1)$,  let $B$ be a bounded operator  on a Hilbert space $\mathcal{H}$ with $\|B\|\leq 1-\tau$. Then for each unitary operator $U$ on $\mathcal{H}$, the operator $U+B$ is invertible and $\|(U+B)^{-1}\|\leq \frac{1}{\tau}. $
\end{lemma}

\begin{lemma}\label{HS-Norm}
Assume that  $ \tau \le f \le 1-\tau$ for some $ \tau\in(0,1) $. Then for any integers $N, \ell \ge 1$ and any  $\epsilon, \epsilon' \in\{0, 1\}^N$,  we have
\[\| H^{f}_{N,\ell}(\epsilon,\epsilon^{\prime})\|_1 \leq \frac{1}{\tau^2}\|\Lambda_{N,\ell}(f)\|_{HS}^2,\]
where  $\Lambda_{N, \ell}(f)$ is defined in \eqref{def-lambda-N} and  $H_{N, \ell}^f(\epsilon, \epsilon')$ is defined in \eqref{def-H-N}.
\end{lemma}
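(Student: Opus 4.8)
The plan is to control the trace norm $\|H^f_{N,\ell}(\epsilon,\epsilon')\|_1$ by factoring the product in \eqref{def-H-N} into a product of two Hilbert--Schmidt operators and invoking the non-commutative H\"older inequality \eqref{nc-holder}. Write $X:=T_N(f)+D_N(\epsilon'-1)$ and $Y:=T_N(f)+D_N(\epsilon-1)$, so that $H^f_{N,\ell}(\epsilon,\epsilon')=X^{-1}\Lambda_{N,\ell}(f)^*\,Y^{-1}\Lambda_{N,\ell}(f)$. The two ingredients I need are: (a) the uniform operator-norm bounds $\|X^{-1}\|\le 1/\tau$ and $\|Y^{-1}\|\le 1/\tau$; and (b) a routine manipulation with the ideal inequalities \eqref{p-norm} and \eqref{nc-holder}.

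The main work is ingredient (a), which is where the hypothesis $\tau\le f\le 1-\tau$ enters and where I would apply Lemma \ref{lem-inverse}. I treat $Y$ (the argument for $X$ is identical with $\epsilon'$ in place of $\epsilon$). The key algebraic observation is the identity
\[
Y=T_N(f)+D_N(\epsilon-1)=T_N\!\left(f-\tfrac12\right)+\tfrac12\,D_N(2\epsilon-1),
\]
which holds because $T_N(f)=T_N(f-\tfrac12)+\tfrac12 I$ and, entrywise, $\tfrac12 I+D_N(\epsilon-1)=\tfrac12 D_N(2\epsilon-1)$. Now $D_N(2\epsilon-1)$ is a diagonal matrix with entries $\pm1$, hence a self-adjoint unitary, while the symbol $f-\tfrac12$ is real with $|f-\tfrac12|\le \tfrac12-\tau$, so the self-adjoint Toeplitz matrix $T_N(f-\tfrac12)$ satisfies $\|T_N(f-\tfrac12)\|\le \tfrac12-\tau$ (via $\langle T_N(g)v,v\rangle=\int_0^1 g(t)|\sum_k v_k e^{-2\pi i k t}|^2\,dt$ and Parseval). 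Therefore $2Y=D_N(2\epsilon-1)+2T_N(f-\tfrac12)$ is the sum of a unitary and an operator of norm $\le 1-2\tau$, so Lemma \ref{lem-inverse} (with threshold $2\tau$) gives $\|(2Y)^{-1}\|\le 1/(2\tau)$, i.e. $\|Y^{-1}\|\le 1/\tau$. The hypothesis forces $\tau\le\tfrac12$; the degenerate case $\tau=\tfrac12$ means $f\equiv\tfrac12$ and is checked directly, so one may assume $2\tau\in(0,1)$.

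With (a) in hand, ingredient (b) is immediate: applying \eqref{nc-holder} with $p=1$, $q=r=2$ to the factorization $H^f_{N,\ell}(\epsilon,\epsilon')=\big(X^{-1}\Lambda_{N,\ell}(f)^*\big)\big(Y^{-1}\Lambda_{N,\ell}(f)\big)$ yields
\[
\|H^f_{N,\ell}(\epsilon,\epsilon')\|_1\le \|X^{-1}\Lambda_{N,\ell}(f)^*\|_2\cdot\|Y^{-1}\Lambda_{N,\ell}(f)\|_2,
\]
and then \eqref{p-norm} (with the outer bounded factors $X^{-1}$, $Y^{-1}$ and the identity) together with $\|\Lambda_{N,\ell}(f)^*\|_2=\|\Lambda_{N,\ell}(f)\|_2=\|\Lambda_{N,\ell}(f)\|_{HS}$ bounds each factor by $\tau^{-1}\|\Lambda_{N,\ell}(f)\|_{HS}$. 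Multiplying gives $\|H^f_{N,\ell}(\epsilon,\epsilon')\|_1\le \tau^{-2}\|\Lambda_{N,\ell}(f)\|_{HS}^2$, as desired.

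I expect the only genuine obstacle to be locating the decomposition $Y=T_N(f-\tfrac12)+\tfrac12 D_N(2\epsilon-1)$, which exposes a scalar multiple of a unitary plus a genuinely small perturbation and lets Lemma \ref{lem-inverse} fire with the sharp constant $1/\tau$; everything after that is bookkeeping with the Schatten-norm inequalities.
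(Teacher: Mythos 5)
Your proof is correct, and its main ingredient coincides exactly with the paper's: you use the same decomposition $T_N(f)+D_N(\epsilon-1)=\frac{1}{2}\big(T_N(2f-1)+D_N(2\epsilon-1)\big)$ (your $T_N(f-\frac{1}{2})+\frac{1}{2}D_N(2\epsilon-1)$ is the same identity), the same bound $\|T_N(2f-1)\|\le 1-2\tau$, and the same appeal to Lemma \ref{lem-inverse} to obtain $\|[T_N(f)+D_N(\epsilon-1)]^{-1}\|\le 1/\tau$; your explicit treatment of the degenerate case $\tau=\frac{1}{2}$ is a detail the paper passes over silently. Where you genuinely differ is the passage from this operator-norm bound to the trace-norm bound. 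The paper argues asymmetrically: it strips off the left inverse factor using \eqref{p-norm}, then uses self-adjointness of $[T_N(f)+D_N(\epsilon-1)]^{-1}$ together with \eqref{self-bound} and \eqref{op-geq} to sandwich $\Lambda_{N,\ell}(f)^*[T_N(f)+D_N(\epsilon-1)]^{-1}\Lambda_{N,\ell}(f)$ between $\pm\frac{1}{\tau}\Lambda_{N,\ell}(f)^*\Lambda_{N,\ell}(f)$ in operator order, and finishes with Lemma \ref{1-norm}. You instead split $H^f_{N,\ell}(\epsilon,\epsilon')$ symmetrically into the two Hilbert--Schmidt factors $X^{-1}\Lambda_{N,\ell}(f)^*$ and $Y^{-1}\Lambda_{N,\ell}(f)$ and apply the non-commutative H\"older inequality \eqref{nc-holder} once, plus \eqref{p-norm} for each factor. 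Both routes give the same constant $1/\tau^2$; yours is marginally shorter, bypassing Lemma \ref{1-norm} and all operator-order considerations (the two arguments are close cousins, since Lemma \ref{1-norm} is itself proved in the paper from \eqref{nc-holder} and \eqref{p-norm}), and it has the minor structural advantage of never using that the middle inverse factor is Hermitian.
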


\begin{proof}
Recall the definition  \eqref{def-T-phi} for $T(\phi)$. Note that $T(\phi)$ is unitary congugate to the operator of multiplication by $\phi$ on $L^2(\T)$ and in particular $\| T(\phi) \| = \| \phi\|_\infty$.

Write
\[
T_N(f) + D_N(\epsilon-1) =  \frac{1}{2}\Big( T_N(2f - 1) + D_N( 2\epsilon - 1) \Big) .
\]
Note that the assumption   $\tau \leq f\leq 1-\tau$ implies that  $ \| 2f -1\|_\infty \le 1 - 2 \tau$ and hence
\begin{align}\label{sup-norm-g}
\|T_N(2f-1)\|\le \| T(2f-1)\|\le 1 - 2 \tau.
\end{align}
Since $D_{N}(2 \epsilon-1)$ is a unitary matrix, by Lemma \ref{lem-inverse} and \eqref{sup-norm-g}, we have
\begin{align}\label{norm-inv-es}
\left\| \Big[   T_N(f) + D_N( \epsilon - 1)\Big]^{-1} \right\| = 2 \left\| \Big[   T_N(2f - 1) + D_N( 2\epsilon - 1)\Big]^{-1} \right\|\leq \frac{1}{\tau}.
\end{align}
The same inequality holds if we replace $\epsilon$ by $\epsilon'$ in \eqref{norm-inv-es}.
Thus  by \eqref{p-norm}, we have
\[
\| H^{f}_{N,\ell}(\epsilon,\epsilon^{\prime})\|_1 \leq \frac{1}{\tau}\Big\|\Lambda_{N,\ell}(f)^{*}\Big[T_{N}(f)+D_{N}(\epsilon-1)\Big]^{-1}\Lambda_{N,\ell}(f)\Big\|_1.
\]
Since $T_N(f) + D_N(\epsilon-1)$ is self-adjoint, by \eqref{self-bound},   the inequality \eqref{norm-inv-es} is equivalent to
\[
-\frac{1}{\tau}I\leq  \Big[T_{N}(f)+D_{N}(\epsilon-1)\Big]^{-1}\leq  \frac{1}{\tau}I.
\]
Hence, by \eqref{op-geq}, we get
$$-\frac{1}{\tau}\Lambda_{N,\ell}(f)^*\Lambda_{N,\ell}^{f}\leq  \Lambda_{N,\ell}(f)^{*}\Big[T_{N}(f)+D_{N}(\epsilon-1)\Big]^{-1}\Lambda_{N,\ell}(f) \leq \frac{1}{\tau}\Lambda_{N,\ell} (f)^*\Lambda_{N,\ell}^{f}.$$
Therefore, by Lemma  \ref{1-norm}, we obtain
$$\| H^{f}_{N,\ell}(\epsilon,\epsilon^{\prime})\|_1 \leq\frac{1}{\tau^2}\|\Lambda_{N,\ell} (f)^*\Lambda_{N,\ell} (f)\|_1=\frac{1}{\tau^2}\|\Lambda_{N,\ell}^{f}\|_{HS}^2.$$
\end{proof}
\begin{lemma}\label{go-to-0}
If $f\in H^{\frac{1}{2}}$, then for any integer $N\ge 1$,  we have
\[
\|\Lambda_{N,\ell} (f) \|_{HS}^2 \ge \sum_{k=1}^{N}k|\widehat{f}(\ell + k)|^2
\]
and
\[
\sup_{N\ge 1}\|\Lambda_{N,\ell} (f) \|_{HS}^2 \le \sum_{k=\ell+1}^{\infty}k|\widehat{f}(k)|^2.
\]
\end{lemma}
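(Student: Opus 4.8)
The plan is to compute $\|\Lambda_{N,\ell}(f)\|_{HS}^2$ explicitly from the definition \eqref{def-lambda-N} and then reorganize the resulting double sum by collecting equal Fourier coefficients. By the definition of the Hilbert--Schmidt norm, summing the squared moduli of all entries of $\Lambda_{N,\ell}(f)$ gives
\[
\|\Lambda_{N,\ell}(f)\|_{HS}^2 = \sum_{i=1}^{N}\sum_{j=1}^{N} \big|\widehat{f}\big(i - j - N - \ell\big)\big|^2 .
\]
Since $1 \le i,j \le N$, the argument $i - j - N - \ell$ ranges over the negative integers from $-(2N-1)-\ell$ to $-1-\ell$. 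Because $f$ is real-valued we have $\widehat{f}(-k) = \overline{\widehat{f}(k)}$, hence $|\widehat{f}(-k)| = |\widehat{f}(k)|$, and I would use this to pass to positive indices.

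The key step is the multiplicity count. Setting $k := N + \ell + (j - i)$, the argument above equals $-k$, and as $(i,j)$ runs over $[N]^2$ the difference $j-i$ runs over $\{-(N-1),\dots,N-1\}$, so $k$ runs over $\{\ell+1,\dots,\ell+2N-1\}$. For a fixed such $k$, the number of pairs $(i,j)$ with $i - j = N + \ell - k$ is exactly $N - |N+\ell - k|$. This yields the closed form
\[
\|\Lambda_{N,\ell}(f)\|_{HS}^2 = \sum_{k=\ell+1}^{\ell+2N-1} \big(N - |N+\ell-k|\big)\,\big|\widehat{f}(k)\big|^2 .
\]

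For the lower bound I would simply restrict attention to the range $\ell+1 \le k \le \ell+N$, on which $N - |N+\ell-k| = k-\ell$ exactly; discarding the remaining (nonnegative) terms and reindexing via $k \mapsto \ell + k$ gives
\[
\|\Lambda_{N,\ell}(f)\|_{HS}^2 \ge \sum_{k=\ell+1}^{\ell+N}(k-\ell)\,|\widehat{f}(k)|^2 = \sum_{k=1}^{N} k\,|\widehat{f}(\ell+k)|^2 ,
\]
which is the first assertion. For the upper bound I would check that the coefficient satisfies $N - |N+\ell-k| \le k$ for every $k$ in the summation range: when $k \le \ell + N$ the coefficient equals $k-\ell \le k$, and when $k > \ell+N$ it equals $2N+\ell-k$, with $2N+\ell-k \le k-\ell \le k$ following from $k \ge \ell + N + 1$. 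Since every index satisfies $k \ge \ell+1$, enlarging the range to $[\ell+1,\infty)$ gives $\|\Lambda_{N,\ell}(f)\|_{HS}^2 \le \sum_{k=\ell+1}^{\infty} k\,|\widehat{f}(k)|^2$, a bound independent of $N$; the hypothesis $f \in H^{1/2}$ guarantees finiteness of the right-hand side, and taking the supremum over $N$ completes the argument.

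The only delicate point is the bookkeeping in the multiplicity count and the verification of the coefficient inequality in the regime $k > \ell + N$; everything else is routine. I expect no genuine obstacle here, since the statement reduces to counting lattice points on the diagonals of an $N \times N$ square and comparing the diagonal occupancy $N - |N+\ell-k|$ against the linear weight $k$.
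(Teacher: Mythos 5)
Your proof is correct and takes essentially the same route as the paper's: both compute $\|\Lambda_{N,\ell}(f)\|_{HS}^2$ exactly as the weighted sum $\sum_{k}\bigl(N-|N+\ell-k|\bigr)|\widehat{f}(k)|^2$ (the paper writes it in the shifted index $k\mapsto \ell+k$, splitting the coefficient into $k$ and $2N-k$), then get the lower bound by discarding the terms beyond $k=\ell+N$ and the upper bound by the coefficient comparison $N-|N+\ell-k|\le k$ followed by enlarging the summation range. The only cosmetic difference is that you make explicit the use of $|\widehat{f}(-k)|=|\widehat{f}(k)|$ for real-valued $f$, which the paper leaves implicit.
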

\begin{proof}
By definition,  for any integer $N \ge 1$,  we have
\begin{align*}
\|\Lambda_{N,\ell}^f\|^2_{HS}&= \sum_{k=1}^{N}k|\widehat{f}(\ell+k)|^2+\sum_{k=N+1}^{2N-1}(2N-k)|\widehat{f}(\ell+k)|^2.
\end{align*}
Thus the first inequality of the lemma is proved.  On the other hand, observing that  for any integer $N\ge 1$, we have $2N-k< k$ if $k\geq N+1$, hence
\begin{align*}
\|\Lambda_{N,\ell}^f\|^2_{HS}&\leq \sum_{k=1}^{N}k|\widehat{f}(\ell+k)|^2+\sum_{k=N+1}^{2N-1}k|\widehat{f}(\ell+k)|^2\\
                             &=\sum_{k=1}^{2N-1}k|\widehat{f}(\ell+k)|^2
                             \leq\sum_{k=1}^{2N-1}(\ell+k)|\widehat{f}(\ell+k)|^2=\sum_{k=\ell+1}^{\infty}k|\widehat{f}(k)|^2.
\end{align*}
This completes the proof of the lemma.
\end{proof}

\begin{lemma}\label{leqID}
For any integers $N,  \ell\ge 1$, we have
\begin{align}\label{T-Lambda-contract}
0 \le T_{N}(f)^{-1/2}\Lambda_{N,\ell}(f)^*T_{N}(f)^{-1}\Lambda_{N,\ell}(f ) T_{N}(f)^{-1/2}\leq I.
\end{align}
\end{lemma}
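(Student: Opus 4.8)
The plan is to recognize the operator in \eqref{T-Lambda-contract} as precisely the quantity that appears on the right-hand side of the equivalence in Lemma \ref{lem-pd}, applied to the block matrix \eqref{block-form-T-J} with $A = T_N(f)$ and $C = \Lambda_{N,\ell}(f)$. Thus the whole statement reduces to checking the two hypotheses of Lemma \ref{lem-pd}, namely that $T_N(f)$ is invertible and non-negative definite, and that the full $2N \times 2N$ block matrix $T_{J_{N,\ell}}(f)$ is non-negative definite.

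First I would establish that $T_{J_{N,\ell}}(f) \ge 0$. Since $f$ takes values in $[0,1]$ and $T(f)$ is unitarily conjugate to multiplication by $f$ on $L^2(\T)$ (as recalled in the proof of Lemma \ref{HS-Norm}), we have $0 \le T(f) \le I$ as an operator on $\ell^2(\Z)$. The matrix $T_{J_{N,\ell}}(f)$ is the compression $\iota^* T(f) \iota$ of $T(f)$ to the coordinate subspace $\ell^2(J_{N,\ell}) \subset \ell^2(\Z)$, where $\iota$ denotes the inclusion; hence by \eqref{op-geq} (applied with the order $0 \le T(f)$) it inherits non-negativity, so $T_{J_{N,\ell}}(f) \ge 0$. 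By the block form \eqref{block-form-T-J} this says exactly that
\[
\left[
\begin{array}{cc}
T_N(f) & \Lambda_{N,\ell}(f)
\\
\Lambda_{N,\ell}(f)^* & T_N(f)
\end{array}
\right] \ge 0.
\]
The same compression argument shows $T_N(f) \ge 0$, and $T_N(f)$ is invertible by Lemma \ref{lem-Lyons}, so both hypotheses of Lemma \ref{lem-pd} are met.

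Applying Lemma \ref{lem-pd} then gives the upper bound in \eqref{T-Lambda-contract} directly, since the non-negativity of the block matrix is equivalent to $T_N(f)^{-1/2}\Lambda_{N,\ell}(f)^* T_N(f)^{-1}\Lambda_{N,\ell}(f) T_N(f)^{-1/2} \le I$. The lower bound is automatic: setting $W = \Lambda_{N,\ell}(f)\, T_N(f)^{-1/2}$, the operator in question equals $W^* T_N(f)^{-1} W$, and since $T_N(f)^{-1} \ge 0$ this is manifestly non-negative. The argument is essentially immediate once the block matrix \eqref{block-form-T-J} is identified; the only point requiring any care is the passage from $0 \le f$ to the non-negativity of the finite compression $T_{J_{N,\ell}}(f)$, and no genuine obstacle arises.
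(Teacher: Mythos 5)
Your proof is correct and follows essentially the same route as the paper: the paper also deduces \eqref{T-Lambda-contract} from Lemma \ref{lem-pd} applied to the block form \eqref{block-form-T-J}, using the non-negative definiteness of $T_{J_{N,\ell}}(f)$. You simply make explicit the details the paper leaves implicit (the compression argument giving $T_{J_{N,\ell}}(f)\ge 0$, the invertibility of $T_N(f)$ via Lemma \ref{lem-Lyons}, and the easy lower bound), all of which are accurate.
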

\begin{proof}
By Lemma \ref{lem-pd}, \eqref{T-Lambda-contract} follows from the block form \eqref{block-form-T-J} for the matrix $T_{J_{N, \ell}}(f)$ and the fact $T_{J_{N, \ell}}(f)$ is  non-negative definite.
\end{proof}

Now we are ready to prove Theorem \ref{psi-mixing}
\begin{proof}[Proof of Theorem \ref{psi-mixing}]

  For any integer $N \ge 1$, take  the particular word $\epsilon^*= (1, 1, \cdots, 1)  \in \{0, 1\}^N$ with constant coefficients $1$. Then by the definition of $\psi_{\mu_f}(\ell)$ and \eqref{R-det-H},  we have
\begin{align}\label{psi-f-low}
\begin{split}
\psi_{\mu_f}(\ell)& \ge \left|\frac{\mu_f([\epsilon^*] \cap \sigma^{-(N+\ell)} [\epsilon^*])}{\mu_f([\epsilon^*] ) \mu_f([\epsilon^*]) }-1\right|  \\
 &=\left|   \det\left( I- T_{N}(f)^{-1}\Lambda_{N,\ell}(f)^{*}T_{N}(f)^{-1}\Lambda_{N,\ell}(f) \right)-1\right|
\\
& \ge 1- \det\left( I- T_{N}(f)^{-1}\Lambda_{N,\ell}(f)^{*}T_{N}(f)^{-1}\Lambda_{N,\ell}(f) \right).
\end{split}
\end{align}
Since $0\le f\leq 1$ and $f \not \equiv 0, f \not \equiv 1$,  we have $0 \le T_{N}(f)\leq I$ and by Lemma \ref{lem-Lyons},  $T_N(f)$ is invertible. In particular, we have
\[T_{N}(f)^{-1}\geq I.\]
It follows that
\begin{align}\label{erase-to-I}
\Lambda_{N}(f)^{-1/2}\Lambda_{N,\ell}(f)^{*}T_{N}(f)^{-1}\Lambda_{N,\ell} (f) T_{N}(f)^{-1/2} \ge \Lambda_{N}(f)^{-1/2}\Lambda_{N,\ell}(f)^{*}\Lambda_{N,\ell} (f) T_{N}(f)^{-1/2}
\end{align}
and
\begin{align}\label{erase-to-I-bis}
\Lambda_{N, \ell}(f) T_N(f)^{-1} \Lambda_{N, \ell}(f)^* \ge \Lambda_{N, \ell}(f)  \Lambda_{N, \ell}(f)^*.
\end{align}
By Lemma \ref{leqID} and  \eqref{syl-id},  \eqref{det-exp}, \eqref{erase-to-I} and then \eqref{erase-to-I-bis}, we obtain
\begin{align}\label{up-det-HS}
\begin{split}
&\det\left( I- T_{N}(f)^{-1}\Lambda_{N,\ell}(f)^{*}T_{N}(f)^{-1}\Lambda_{N,\ell}(f) \right)\\
=&\det\left( I-T_{N}(f)^{-1/2}\Lambda_{N,\ell}(f)^{*}T_{N}(f)^{-1}\Lambda_{N,\ell} (f) T_{N}(f)^{-1/2} \right)
\\
\le &\exp\left[-\tr\Big(T_{N}(f)^{-1/2}\Lambda_{N,\ell}(f)^{*}T_{N}(f)^{-1}\Lambda_{N,\ell}(f) T_{N}(f)^{-1/2}\Big)\right]
\\
\le& \exp\left[-\tr\Big(T_{N}(f)^{-1/2}\Lambda_{N,\ell}(f)^{*}\Lambda_{N,\ell}(f) T_{N}(f)^{-1/2}\Big)\right]
\\
= & \exp\left[-\tr\Big(  \Lambda_{N,\ell}(f)   T_{N}(f)^{-1}\Lambda_{N,\ell}(f)^{*} \Big)\right]
\\
\le &  \exp\left[-\tr\Big(  \Lambda_{N,\ell}(f)   \Lambda_{N,\ell}(f)^{*} \Big)\right]  =  \exp\left(- \|\Lambda_{N,\ell}(f)\|_{HS}^2\right).
\end{split}
\end{align}
Therefore, by \eqref{psi-f-low}, \eqref{up-det-HS} and Lemma \ref{go-to-0}, we have
 \[\psi_{\mu_f}(\ell)\geq 1-\exp\left( -\sum_{k=1}^{N}k|\widehat{f}(\ell+k)|^2 \right), \quad \forall N\geq 1.\]
Since $N$ is arbitrary, we obtain
 \[\psi_{\mu_f}(\ell)\geq 1-\exp\left( -\sum_{k=1}^{\infty}k|\widehat{f}(\ell+k)|^2 \right).\]
Since $k \ge (k + \ell) / (\ell  + 1)$ for any integer $k \ge 1$, we have
\[
\sum_{k=1}^{\infty}k|\widehat{f}(\ell+k)|^2   \ge  \frac{1}{\ell + 1} \sum_{k = 1}^\infty (k + \ell) |\widehat{f}(\ell+k)|^2  =  \frac{1}{\ell + 1} \sum_{k =  \ell + 1}^\infty k |\widehat{f}(k)|^2
\]
and thus
\[\psi_{\mu_f}(\ell)\geq 1-\exp\left( - \frac{1}{\ell + 1}\sum_{k= \ell + 1}^{\infty}k|\widehat{f}(k)|^2 \right).\]
This is exactly the desired inequality \eqref{lower-bdd}.

Now if $\mu_f$ is $\psi$-mixing, then the condition \[\lim_{\ell \to\infty}\psi_{\mu_f}(\ell) = 0\] implies in particular that $\sum_{k = 1}^\infty k |\widehat{f}(k)|^2 <\infty$, which combined with the assumption that $f$ is real-valued, implies that $f\in H^{1/2}(\T)$.

Finally, we show that  if $f\in H^{\frac{1}{2}}(\T)$ and $\tau \le f \le 1- \tau$ then $\mu_f$ is $\psi$-mixing. By Lemma \ref{lem-sup}, Corollary \ref{Simon-Theorem}, Lemma \ref{HS-Norm} and then Lemma \ref{go-to-0},  for any $\ell \ge 1$,  we have
\begin{align*}
\psi_{\mu_f}(\ell)  & \le \sup_{N\ge 1} \Big(  \| H^{f}_{N,\ell}(\epsilon,\epsilon^{\prime})\|_1 \cdot       \exp(\|H^{f}_{N,\ell}(\epsilon,\epsilon^{\prime})\|_1 +1)       \Big)
\\
& \le \sup_{N\ge 1} \Big(  \frac{1}{\tau^2}\|\Lambda_{N,\ell}(f)\|_{HS}^2 \cdot \exp\Big( \frac{1}{\tau^2}\|\Lambda_{N,\ell}(f)\|_{HS}^2 +1\Big) \Big)
\\
& \le  \frac{1}{\tau^2} \Big(\sum_{k=\ell+1}^{\infty}k|\widehat{f}(k)|^2  \Big) \exp\Big(   \frac{1}{\tau^2} \sum_{k=\ell+1}^{\infty}k|\widehat{f}(k)|^2  + 1  \Big).
\end{align*}
This is exactly the inequality \eqref{upper-bound} and the $\psi$-mixing property of $\mu_f$ now follows from the assumption $f\in H^{1/2}(\T)$.
\end{proof}

\section{$L^{q}$-dimensions of stationary determinantal measures}
Recall the definition \eqref{def-q-dim} for the $L^q$-dimension of a probability measure on a metric space.
In this section, we investigate the $L^q$-dimension of the stationary determinantal measures $\mu_f^{+}$ on the metric space $(\{0, 1\}^\N, d)$, where $d$ is defined by \eqref{def-metric}.

  In this section, by slightly abusing the notation $[\epsilon] \subset \{0, 1\}^\Z$ introduced in \eqref{def-cyl},  for any $\epsilon \in \{0, 1\}^N$, by $[\epsilon]$, we mean the corresponding cylinder set in $\{0, 1\}^\N$ defined by
\[
[\epsilon] := \Big\{x\in \{0, 1\}^\N \Big|x_0x_1\cdots x_{N-1}  = \epsilon \Big\}.
\]

\subsection{Existence of $L^q$-dimensions}

Recall the definition \eqref{def-S-N} of $S_N^{(q)}(\mu_f^{+})$. By the definition \eqref{def-metric} of the metric $d$,  for any $x\in \{0, 1\}^\N$, we have $B(x, 2^{-N}) = [x_0x_1\cdots x_{N-1}]$. Therefore,
\begin{align}\label{comp-S-N}
S_N^{(q)}(\mu_f^{+}) & =\int_{\{0, 1\}^\N} \mu_f^{+}(B(x,2^{-N}))^{q-1} d\mu_f^{+}(x)
= \sum_{\epsilon \in \{0,1\}^N} \mu_f^{+}([\epsilon])^{q}.
\end{align}
By \eqref{DPP-f-C}, for any $\epsilon \in \{0,1\}^N$, we have
\begin{align}\label{plus-cyl-w}
\mu_f^{+}([\epsilon])=\det\Big(D_{N}(2\epsilon-1)T_{N}(f)+D_{N}(1-\epsilon)\Big).
\end{align}
The following simple lemma will be useful in our computation:
\begin{lemma}\label{lem-matrix-id}
For any $\epsilon \in \{0, 1\}^N$, we have
\begin{align}\label{simple-matrix-id}
D_{N}(2\epsilon-1)T_{N}(f)+D_{N}(1-\epsilon)= \frac{1}{2} \Big( D_N(2 \epsilon-1) T_N(2f-1) +  I\Big),
\end{align}
where $I$ stands for the $N\times N$ identity matrix.
\end{lemma}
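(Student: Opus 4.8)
The plan is to reduce the claimed identity to an elementary entrywise comparison of diagonal matrices, exploiting the linearity of the Toeplitz assignment $\phi \mapsto T_N(\phi)$ in its argument. First I would observe that since the Fourier coefficient $\widehat{\phi}(n)$ depends linearly on $\phi$, the map $\phi \mapsto T_N(\phi)$ is linear, so that $T_N(2f-1) = 2 T_N(f) - T_N(1)$. The key computation is then to identify $T_N(1)$: since $\widehat{1}(n) = \int_0^1 e^{-2\pi i n t}\,dt = \delta_{n,0}$, the Toeplitz matrix $T_N(1)$ has entry $\widehat{1}(i-j) = \delta_{i,j}$ and therefore equals the $N\times N$ identity matrix $I$. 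Hence $T_N(2f-1) = 2 T_N(f) - I$.

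Substituting this into the right-hand side of the claimed identity gives
\[
\frac{1}{2}\Big(D_N(2\epsilon-1)T_N(2f-1) + I\Big) = D_N(2\epsilon-1)T_N(f) - \tfrac{1}{2}D_N(2\epsilon-1) + \tfrac{1}{2}I.
\]
Comparing with the left-hand side $D_N(2\epsilon-1)T_N(f) + D_N(1-\epsilon)$, the term $D_N(2\epsilon-1)T_N(f)$ cancels on both sides, and the identity reduces to the purely diagonal assertion
\[
D_N(1-\epsilon) = \tfrac{1}{2}\big(I - D_N(2\epsilon-1)\big).
\]

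Finally I would verify this last identity entrywise: both sides are diagonal matrices, and for each coordinate $k$ the $k$-th diagonal entry of the right-hand side is $\tfrac{1}{2}\big(1 - (2\epsilon_k - 1)\big) = 1 - \epsilon_k$, which matches the $k$-th diagonal entry $1 - \epsilon_k$ of $D_N(1-\epsilon)$. This closes the argument. There is no genuine obstacle here: the only point requiring any attention is the correct normalization, i.e.\ the identification $T_N(1) = I$ coming from $\widehat{1} = \delta_0$, after which the result is forced by linearity of $T_N$ together with a one-line diagonal check.
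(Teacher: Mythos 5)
Your proof is correct: linearity of $\phi \mapsto T_N(\phi)$, the identification $T_N(1)=I$ from $\widehat{1}(n)=\delta_{n,0}$, and the entrywise check $\tfrac{1}{2}\big(1-(2\epsilon_k-1)\big)=1-\epsilon_k$ together give exactly the stated identity. The paper states this lemma without proof, treating it as elementary, and your argument is precisely the intended verification, so there is nothing to add or compare.
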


For simplifying our notation, in what follows, we denote
\begin{align}\label{def-theta-g}
\theta: = 2 \epsilon -1 \in \{\pm 1\}^N, \quad g: = 2 f -1.
\end{align}
By \eqref{comp-S-N},  \eqref{plus-cyl-w} and \eqref{simple-matrix-id}, using the notation \eqref{def-theta-g}, we have

\begin{align}\label{S-exp}
\begin{split}
 S_N^{(q)}(\mu_f^{+}) & = \sum_{\theta\in \{\pm 1\}^N} \frac{1}{2^{qN}} \dt^{q}(I +D_N(\theta)T_{N}(g))
\\
& =  \frac{1}{2^{(q-1)N}}  \sum_{\theta \in \{\pm 1\}^N }   \frac{1}{2^N} \dt^{q}(I +D_N(\theta)T_{N}(g)) \\  &=\frac{1}{2^{(q-1)N}} \E_{\theta}[\dt^{q}(I +D_N(\theta)T_{N}(g))]  = \frac{1}{2^{(q-1)N}} \Sigma_{N}^{(q)} (g),
\end{split}
\end{align}
where $\E_\theta$ means the expectation on $\theta$ with respect to the normalized Haar measure of the finite  group $\{\pm 1\}^N$ and
\begin{align}\label{def-sigma-q}
\Sigma_{N}^{(q)} (g):= \E_{\theta}[\dt^{q}(I +D_N(\theta)T_{N}(g))].
\end{align}

Now we are ready to prove Proposition \ref{prop-sub-m}.
\begin{proof}[Proof of Proposition \ref{prop-sub-m}]
Note that $\mu_f^{+}([\epsilon]) = \mu_{1-f}^{+} ([1- \epsilon])$ for any $\epsilon \in \{0, 1\}^N$. Then by the definition of $S_N^{(q)}(\mu_f^{+})$, we have $S_N^{(q)}(\mu_f^{+}) = S_N^{(q)}(\mu_{1-f}^{+})$.
Therefore,  we only need to prove Proposition \ref{prop-sub-m} under the assumption that $1/2 \le f \le 1$.

The assumption $1/2 \le f \le 1$ implies that  $g = 2f -1 \ge 0$.
By \eqref{S-exp}, it suffices to prove for any integers $M, N \ge 1$, we have
\begin{align}\label{des-sub-m}
\Sigma_{M+N}^{(q)}(g) \leq \Sigma_{M}^{(q)} (g) \cdot \Sigma_{N}^{(q)} (g) .
\end{align}
Using  \eqref{det1+L}, we have
\[
\dt(I +D_N(\theta)T_{N}(g))= \sum_{J\subset[N]}\det  \Big[\Big( D_N(\theta)\cdot  T_{J}(g) \Big)_J\Big]= \sum_{J\subset[N]}\det D_{J}(\theta)\cdot \det T_{J}(g).
\]
For any subset $J \subset [N]$, set
\begin{align}\label{def-Walsh} a_J:= \det T_{J}(g) \an w_{J}: =\det D_{J}(\theta)= \prod_{j\in J} \theta_j.
\end{align}
Then
 \begin{align}\label{sigma-N-q}
\Sigma_N^{(q)} (g)  = \E_{\theta}[\dt^q(I+D_{N}(\theta)T_{N}(g))]=\sum_{J_1, \cdots, J_q \subset [N]} \E_{\theta}[w_{J_1}\cdots w_{J_q}] a_{J_1}\cdots a_{J_q}.
\end{align}

Note that the assumption $g\ge 0$ implies that for any finite subset $J\subset \N$, the matrix $T_J(g)$ is non-negative definite. Therefore $a_J \ge 0$. Moreover, using Fischer's inequality \eqref{Fischer-inq}, for any pair of disjoint subsets $J_1, J_2 \subset \N$, we have
\begin{align}\label{pos-a-J}
 0 \le a_{J_1\sqcup J_2} \leq a_{J_1}\cdot a_{J_2}.
\end{align}
 Note also  that
 \[\E[w_{J_1} \cdots w_{J_{q}}]  = \left\{
\begin{array}{cl}
1 &  \text{if $\sum_{k=1}^q \delta(i\in J_k)$ is even for all $i \in \N$}
\vspace{2mm}
\\
0 & \text{otherwise}
\end{array}
\right..
\]
In particular, for any finite subsets $J_1, \cdots, J_q \in \N$,  we have
\begin{align}\label{pos-c-J}
c(J_1,J_2,\cdots,J_q) : = \E[w_{J_1} \cdots w_{J_{q}}] \ge 0
\end{align}
Note that any subset $J\subset [M+N]$ can be written in a  disjoint union
\[
J= \Big(J\cap [M] \Big) \sqcup \Big(J\cap ([N] +M)\Big),
\]
where $[N]+M: = \{1 + M, 2 + M, \cdots, N+M\}$. Thus by using \eqref{pos-a-J} and \eqref{pos-c-J},  we have
 \begin{align*}
\Sigma_{M+N}^{(q)} (g)  &= \sum_{J_1, \cdots, J_q \subset [M+N]}c(J_1,\cdots,J_q)a_{J_1}\cdots a_{J_q} \\
  &=\sum_{ \substack{J_1^\prime, \cdots, J_{q}^{\prime} \subset [M]\\J_{1}^{''}, \cdots, J_{q}^{''} \subset [N]+M} }c(J_1^{\prime}\sqcup J_1^{''},\cdots,J_q^{\prime}\sqcup J_q^{''} )a_{J_1^{\prime}\sqcup J_1^{''}}\cdots a_{J_q^{\prime}\sqcup J_q^{''}} \\
  &\leq \sum_{ \substack{J_1^\prime, \cdots, J_{q}^{\prime} \subset [M]\\J_{1}^{''}, \cdots, J_{q}^{''} \subset [N]+M} }c(J_1^{\prime}\sqcup J_1^{''},\cdots,J_q^{\prime}\sqcup J_q^{''} )a_{J_1^{\prime}}\cdot a_{ J_1^{''}}\cdots a_{J_q^{\prime}} a_{J_q^{''}}.
 \end{align*}
Since the two subsets $J_1' \cup \cdots \cup J_q'$ and $J_1'' \cup \cdots \cup J_q''$ are disjoint,  by the definitions of the functions  $w_J$, the two random variables $\prod_{k=1}^q w_{J_k'}$ and $\prod_{k=1}^q w_{J_k''}$ are independent and
\[
\prod_{k=1}^q w_{J_k' \sqcup J_k''} = \prod_{k=1}^q w_{J_k'} \cdot \prod_{k=1}^q w_{J_k''}
\]
 Hence for any $J_1^\prime, \cdots, J_{q}^{\prime} \subset [M]$ and any $J_{1}^{''}, \cdots, J_{q}^{''} \subset [N]+M$,  we have
\begin{align*}
c(J_1^{\prime}\sqcup J_1^{''},\cdots,J_q^{\prime}\sqcup J_q^{''} )& =\E_{\theta}\Big[  \prod_{k=1}^q w_{J_k' \sqcup J_k''} \Big]  =\E_{\theta}\Big[ \prod_{k=1}^q w_{J_k'} \cdot \prod_{k=1}^q w_{J_k''} \Big]
\\
& = \E_{\theta}\Big[ \prod_{k=1}^q w_{J_k'}\Big] \cdot \E_\theta \Big[\prod_{k=1}^q w_{J_k''} \Big]  = c(J_1', \cdots, J_q') \cdot c(J_1'', \cdots, J_q'')
\\
&  = c(J_1^{\prime}, \cdots, J_q^{\prime} ) c(J_1^{''}-M,\cdots, J_q^{''}-M ),
\end{align*}
where $J_k''-M$ is the shifted set of $J_k''$ defined by $J_k''-M= \{j-M: j \in J_k''\}$. Observing that  $(J_1'', \cdots, J_q'')$ ranges over all $q$-tuples of subsets of $[N]+M$ if and only if $(J_1''-M, \cdots, J_q''-M)$ ranges over all $q$-tuples of subsets of $[N]$.  Hence, by using the identities
\[
a_{J_k''} = a_{J_k''-M},
\]
 we obtain
\begin{align*}
&     \Sigma_{M+N}^{(q)}(g) \le \sum_{ \substack{J_1^\prime, \cdots, J_{q}^{\prime} \subset [M]\\J_{1}^{''}, \cdots, J_{q}^{''} \subset [N]+M} }c(J_1^{\prime}\sqcup J_1^{''},\cdots,J_q^{\prime}\sqcup J_q^{''}
 )a_{J_1^{\prime}} a_{ J_1^{''}}\cdots a_{J_q^{\prime}} a_{J_q^{''}}
\\
& = \sum_{ \substack{J_1^\prime, \cdots, J_{q}^{\prime} \subset [M]\\J_{1}^{''}, \cdots, J_{q}^{''} \subset [N] + M} }c(J_1^{\prime}, \cdots, J_q^{\prime} ) c(J_1^{''}-M,\cdots, J_q^{''}-M )\cdot a_{ J_1^{''}}\cdots a_{J_q^{\prime}} \cdot  a_{J_1^{''}-M}\cdots a_{J_q^{''}-M}
\\
& = \sum_{ \substack{J_1^\prime, \cdots, J_{q}^{\prime} \subset [M]\\ \widetilde{J_{1}}, \cdots, \widetilde{J_{q}} \subset [N]} }c(J_1^{\prime}, \cdots, J_q^{\prime} ) c(\widetilde{J_1},\cdots, \widetilde{J_q})\cdot a_{ J_1^{''}}\cdots a_{J_q^{\prime}} \cdot  a_{\widetilde{J_1}}\cdots a_{\widetilde{J_q}}
=\Sigma_{M}^{(q)} (g) \cdot \Sigma_{N}^{(q)}(g).
 \end{align*}
We thus obtain  the desired inequality  \eqref{des-sub-m} and  complete the proof of Proposition \ref{prop-sub-m}.
\end{proof}

\begin{proof}[Proof of Theorem \ref{thm-CD}]
Note that for $r = 2^{-N}$, we have
\begin{align}\label{log-ratio}
\frac{ \log \int_{\{0, 1\}^\N} \mu_f^{+}(B(x,2^{-N}))^{q-1} d\mu_f^{+}(x)}{\log (2^{-N})} =  \frac{\log S_N^{(q)}(\mu_f^{+})}{ N \log (1/2)}.
\end{align}
Thus Theorem \ref{thm-CD} follows from Proposition  \ref{prop-sub-m}
and Fekete's Subadditive Lemma.
\end{proof}

\subsection{Estimations of the correlation dimension}
Recall the definition \eqref{def-sigma-q} for $\Sigma_N^{(q)}(g)$.

\begin{lemma}\label{lem-q-2}
When $q = 2$, we have
\[\Sigma_N^{(2)}(g):=\sum_{J\subset [N]}\dt^2 T_J(g).\]
\end{lemma}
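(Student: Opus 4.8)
The plan is to specialize the general expansion \eqref{sigma-N-q} to the case $q=2$ and then exploit the orthogonality of the Walsh characters $w_J$ on the group $\{\pm 1\}^N$. First I would recall that, by the determinant expansion \eqref{det1+L} applied to $L = D_N(\theta)T_N(g)$ together with the fact that $D_N(\theta)$ is diagonal (so that the principal submatrix indexed by $J$ factors as $D_J(\theta)T_J(g)$), one has
\[
\dt\big(I + D_N(\theta) T_N(g)\big) = \sum_{J\subset [N]} w_J\, a_J,
\]
with $a_J = \det T_J(g)$ and $w_J = \prod_{j\in J}\theta_j$ as in \eqref{def-Walsh}. Squaring this identity and taking the expectation $\E_\theta$ with respect to the normalized Haar measure of $\{\pm 1\}^N$ gives
\[
\Sigma_N^{(2)}(g) = \sum_{J_1, J_2 \subset [N]} \E_\theta\big[w_{J_1} w_{J_2}\big]\, a_{J_1} a_{J_2},
\]
which is exactly \eqref{sigma-N-q} in the case $q = 2$.

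The one substantive step is the evaluation of $\E_\theta[w_{J_1}w_{J_2}]$. Writing $w_{J_1}w_{J_2} = \prod_{j=1}^N \theta_j^{\,\delta(j\in J_1)+\delta(j\in J_2)}$ and using that the coordinates $\theta_j$ are independent and uniform on $\{\pm 1\}$, so that $\E[\theta_j] = 0$ and $\E[\theta_j^2] = 1$, I would observe that the expectation equals $1$ when every exponent $\delta(j\in J_1)+\delta(j\in J_2)$ is even and $0$ otherwise. Since each such exponent lies in $\{0,1,2\}$, it is even precisely when $j \in J_1\cap J_2$ or $j \notin J_1\cup J_2$; requiring this for all $j$ forces the symmetric difference $J_1 \triangle J_2$ to be empty, that is, $J_1 = J_2$. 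This is simply the $q=2$ instance of the formula for $c(J_1,\cdots,J_q)$ recorded just before \eqref{pos-c-J}. Hence $\E_\theta[w_{J_1}w_{J_2}] = \mathbbm{1}\{J_1 = J_2\}$.

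Substituting this orthogonality relation back collapses the double sum to its diagonal $J_1 = J_2 = J$, leaving
\[
\Sigma_N^{(2)}(g) = \sum_{J\subset[N]} a_J^2 = \sum_{J\subset[N]} \dt^2 T_J(g),
\]
which is the asserted identity (the convention $\det T_\emptyset(g) = 1$ from \eqref{det1+L} ensures the empty term contributes $1$ on both sides). I do not expect any genuine obstacle here: the only real ingredient is the orthonormality of the Walsh characters, and the remainder is bookkeeping that reuses the already-established expansion \eqref{sigma-N-q}.
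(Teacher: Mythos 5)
Your proof is correct and follows essentially the same route as the paper: the paper's own (very terse) proof invokes exactly the expansion \eqref{sigma-N-q} together with the orthogonality relation $\E_\theta[w_{J_1}w_{J_2}]=\delta_{J_1=J_2}$, which are precisely the two steps you carry out in detail. Your re-derivation of the expansion from \eqref{det1+L} and the coordinate-wise evaluation of the Walsh expectation are both sound, so there is nothing to correct.
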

\begin{proof}
This follows immediately from the equality \eqref{sigma-N-q} and the elementary fact that
\[
\E[w_{J_1} w_{J_2}] = \delta_{J_1 = J_2}, \quad \forall J_1, J_2 \subset [N].
\]
\end{proof}

\begin{theorem}[Szeg\"{o}'s First Theorem \cite{GS-Toeplitz,Szego-OP}]Let $\phi$ be a non-negative Lebesgue integrable function on the unit circle. Then
$$\lim_{N\to \infty}\frac{ \log \det T_{N}(\phi)}{N}=\int_{0}^{1}\log \phi(e^{i2 \pi t})dt.$$
\end{theorem}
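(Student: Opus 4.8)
The plan is to convert the determinant growth into the convergence of the one-step prediction error of the stationary process with spectral density $\phi$, and then to evaluate that error by combining Jensen's inequality with the Szeg\"{o} (outer) function. Concretely, I would first use the orthogonal-polynomial interpretation of Toeplitz determinants: writing $D_N := \det T_N(\phi)$ and $d\mu = \phi\, dt$, the successive ratio
\[
\delta_n := \frac{D_{n+1}}{D_n} = \min\Big\{ \int_0^1 |P(e^{2\pi i t})|^2 \phi(e^{2\pi i t})\, dt : P \text{ monic}, \ \deg P = n \Big\}
\]
is the squared $L^2(\mu)$-norm of the monic orthogonal polynomial of degree $n$. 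Factoring out the unimodular $z^n$ shows that $\delta_n = \mathrm{dist}_{L^2(\mu)}\big(1, \spann\{z^{-1}, \dots, z^{-n}\}\big)^2$, so $(\delta_n)$ is non-increasing and converges to the prediction error $\delta_\infty = \mathrm{dist}_{L^2(\mu)}\big(1, \overline{\spann}\{z^{-k} : k \ge 1\}\big)^2$. Since $D_0 = 1$ gives $\log D_N = \sum_{n=0}^{N-1}\log\delta_n$, the Ces\`aro (Stolz) lemma reduces the theorem to the single identity $\delta_\infty = G(\phi)$, where $G(\phi) := \exp\big(\int_0^1 \log\phi\, dt\big)$ denotes the geometric mean.

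For the lower bound $\delta_\infty \ge G(\phi)$ I would argue at each level. Any competitor may be taken to be a trigonometric polynomial $F$ with frequencies in $\{0, -1, \dots, -n\}$ and $\widehat{F}(0) = 1$; then $\overline{F}$ is an analytic polynomial with $\overline{F}(0) = 1$, so Jensen's formula gives $\int_0^1 \log|F|\, dt \ge \log|\overline{F}(0)| = 0$. The arithmetic--geometric mean (Jensen) inequality then yields
\[
\log \int_0^1 |F|^2 \phi\, dt \ \ge\ \int_0^1 \log\big(|F|^2\phi\big)\, dt \ =\ 2\int_0^1 \log|F|\, dt + \int_0^1 \log\phi\, dt \ \ge\ \int_0^1 \log\phi\, dt,
\]
whence $\int_0^1 |F|^2\phi\, dt \ge G(\phi)$ and therefore $\delta_\infty \ge G(\phi)$.

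The matching upper bound is the main obstacle, and it is where the hypothesis $\phi \ge 0$ together with the dichotomy on $\int_0^1\log\phi\, dt$ enters. When $\log\phi \in L^1(\T)$, I would introduce the outer (Szeg\"{o}) function $D \in H^2$ of the disk determined by $|D|^2 = \phi$ a.e.\ and $D(0) = G(\phi)^{1/2} > 0$; the function $u := G(\phi)^{1/2}/D \in H^2$ then satisfies $u(0) = 1$ and $\int_0^1 |u|^2\phi\, dt = \int_0^1 G(\phi)\, dt = G(\phi)$, so it is the extremal competitor. Approximating $u$ in $L^2(\mu)$ by analytic polynomials normalized at the origin---the required density being exactly the Szeg\"{o} condition $\log\phi \in L^1(\T)$---produces competitors $F$ with $\int_0^1 |F|^2\phi\, dt \to G(\phi)$, giving $\delta_\infty \le G(\phi)$. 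In the complementary case $\int_0^1 \log\phi\, dt = -\infty$ one has $G(\phi) = 0$; here the same Kolmogorov--Szeg\"{o} dichotomy shows that the prediction error vanishes, so $\delta_\infty = 0 = G(\phi)$ and the Ces\`aro limit is $-\infty$. The essential difficulty throughout is thus the hard-analytic input of the upper bound: the construction of the outer function and the density of polynomials in the Hardy space under the Szeg\"{o} condition.
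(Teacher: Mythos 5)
The paper does not prove this statement at all: Szeg\H{o}'s First Theorem is quoted as a classical result, with the proof deferred to the cited books of Grenander--Szeg\H{o} and Szeg\H{o}. So there is no in-paper argument to compare against; your outline is the standard textbook route (Toeplitz determinant ratios as prediction errors, Ces\`aro summation, then the Szeg\H{o}--Kolmogorov prediction-error identity). The reduction part of your proposal is correct and complete: aside from the trivial case $\phi=0$ a.e.\ (where both sides are $-\infty$), every $T_N(\phi)$ is positive definite, the ratio $\delta_n=D_{n+1}/D_n$ is the monic extremal quantity, the spans increase so $(\delta_n)$ is non-increasing, and telescoping plus Ces\`aro reduces the theorem to $\delta_\infty=G(\phi)$. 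The Jensen argument for the lower bound $\delta_\infty\ge G(\phi)$ is also airtight.

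The upper bound, however, has genuine gaps as written. First, the claim $u=G(\phi)^{1/2}/D\in H^2$ is false in general: $\int_0^1|u|^2\,dt=G(\phi)\int_0^1\phi^{-1}\,dt$, which diverges, e.g., when $\phi$ vanishes to first order at a single point (such $\phi$ is integrable with $\log\phi\in L^1(\T)$). What is true, and what you actually use, is only $u\in L^2(\mu)$. Second, the approximation step is the real content of the theorem and is asserted rather than proved; moreover it is misattributed. Point evaluation at the origin is not continuous in $L^2(\mu)$, so approximating $u$ by polynomials does not yield polynomials with $P(0)=1$; the statement you need is $1-u\in\overline{\spann}_{L^2(\mu)}\{z^k:k\ge 1\}$, which under the isometry $h\mapsto hD$ of $L^2(\mu)$ into $L^2(dt)$ becomes $D-G(\phi)^{1/2}\in\overline{\spann}_{L^2(dt)}\{z^kD:k\ge 1\}$, and identifying that closure with $zH^2$ is precisely Beurling's theorem applied to the outer function $D$ --- not a consequence of ``the Szeg\H{o} condition'' as such (indeed, under the Szeg\H{o} condition the analytic polynomials are \emph{not} dense in $L^2(\mu)$, so one must say carefully which density is invoked). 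Third, disposing of the case $\int_0^1\log\phi\,dt=-\infty$ by citing ``the Kolmogorov--Szeg\H{o} dichotomy'' is circular: that dichotomy is the statement being proved. All three gaps can be closed by one standard device: prove the upper bound first for $\phi_\varepsilon=\phi+\varepsilon$. Then $|u_\varepsilon|=(G(\phi_\varepsilon)/\phi_\varepsilon)^{1/2}\le (G(\phi_\varepsilon)/\varepsilon)^{1/2}$, so $u_\varepsilon\in H^\infty$; its Fej\'er means $P_n$ satisfy $P_n(0)=u_\varepsilon(0)=1$ and $\|P_n\|_\infty\le\|u_\varepsilon\|_\infty$, and converge a.e.\ along a subsequence, so dominated convergence gives $\delta_\infty(\phi_\varepsilon)\le\lim\int_0^1|P_n|^2\phi_\varepsilon\,dt=G(\phi_\varepsilon)$. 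Finally, monotonicity gives $\delta_\infty(\phi)\le\delta_\infty(\phi_\varepsilon)\le G(\phi_\varepsilon)$, and $G(\phi_\varepsilon)\downarrow G(\phi)$ as $\varepsilon\downarrow 0$ by monotone convergence; this also settles the $\int_0^1\log\phi\,dt=-\infty$ case without circularity.
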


\begin{lemma}\label{J-square}
Let  $\phi: \T\rightarrow \R$ be a real-valued bounded Borel function. Then for any subset $J\subset \Z$, we have
$$T_{J}(\phi)^2\leq T_{J}(\phi^2).$$
\end{lemma}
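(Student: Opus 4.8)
The plan is to realize $T_J(\phi)$ as a compression of the full bi-infinite Toeplitz operator $T(\phi)$ and to exploit the fact that, via the Fourier transform $\ell^2(\Z)\cong L^2(\T)$, the operator $T(\phi)$ is unitarily equivalent to multiplication by $\phi$ on $L^2(\T)$ (as already recorded in the proof of Lemma \ref{HS-Norm}). Since $\phi$ is real-valued, $T(\phi)$ is self-adjoint, and since multiplication by $\phi$ composed with itself is multiplication by $\phi^2$, we obtain the \emph{exact} operator identity $T(\phi)^2=T(\phi^2)$ on the whole lattice. The only reason the analogous identity fails after restriction to $J$ is that compression does not commute with squaring; the entire content of the lemma is that the resulting defect has a definite sign.

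Concretely, I would let $P_J$ denote the orthogonal projection of $\ell^2(\Z)$ onto the coordinate subspace $\ell^2(J)$ and observe that $T_J(\phi)=P_J T(\phi)P_J$ and $T_J(\phi^2)=P_J T(\phi^2)P_J=P_J T(\phi)^2 P_J$, both viewed as operators on $\ell^2(J)$. Next I would insert the resolution of the identity $I=P_J+(I-P_J)$ between the two factors of $T(\phi)^2$, which yields
\[
T_J(\phi^2)-T_J(\phi)^2 = P_J T(\phi)\big(I-P_J\big)T(\phi)P_J,
\]
since $P_J T(\phi)P_J T(\phi)P_J=(P_J T(\phi)P_J)^2=T_J(\phi)^2$.

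Finally, setting $A:=(I-P_J)T(\phi)P_J$ and using that $T(\phi)$ is self-adjoint while $I-P_J$ is a self-adjoint idempotent, I would rewrite the right-hand side as $A^*A$, because $A^*A=P_J T(\phi)(I-P_J)(I-P_J)T(\phi)P_J=P_J T(\phi)(I-P_J)T(\phi)P_J$. Hence $T_J(\phi^2)-T_J(\phi)^2=A^*A\ge 0$, which is exactly $T_J(\phi)^2\le T_J(\phi^2)$. I do not anticipate a genuine obstacle: the only points demanding care are the bookkeeping of the compression identity $T_J(\phi)=P_J T(\phi)P_J$ and the recognition of the defect term $P_J T(\phi)(I-P_J)T(\phi)P_J$ as a Gram operator $A^*A$. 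Since $\phi$ is bounded, all operators involved are bounded, so the argument applies verbatim whether $J$ is finite or infinite.
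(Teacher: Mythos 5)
Your proof is correct and follows essentially the same route as the paper: both compress the bi-infinite operator via $P_J$, use $T(\phi)^2=T(\phi^2)$ on $\ell^2(\Z)$, and reduce the claim to the positivity of the compression defect. The only cosmetic difference is that you exhibit the defect explicitly as a Gram operator $A^*A$ with $A=(I-P_J)T(\phi)P_J$, whereas the paper obtains the same inequality by conjugating the operator inequality $P_J\le I$ as in \eqref{op-geq}; these are the same computation.
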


\begin{proof}
For any $J\subset \Z$, the space $\ell^2(J)$ is naturally identified with a subspace of $\ell^2(\Z)$. Let $P_J$ denote  $P_{J}$ is the orthogonal projection from $\ell^{2}(\Z)$ onto $\ell^2(J)$, then we have
\[T_J(\phi)=P_{J}T(\phi)P_{J} \an T_J(\phi^2) = P_J T(\phi^2) P_J. \]
Note that the assumption that $\phi$ is real-valued implies that $T(\phi)$ is self-adjoint.  The elementary operator inequality  $P_J \le I$, where $I$ is the identity operator on $\ell^2(\Z)$,  combined with   \eqref{op-geq}, implies that
\begin{align*}
T_J(\phi)^2 &  = P_J T(\phi) P_J P_J T(\phi) P_J = [ T(\phi) P_J]^* P_J [T(\phi) P_J]
\\
& \le [ T(\phi) P_J]^*  [T(\phi) P_J] = P_J T(\phi)^2 P_J.
\end{align*}
A direct computation shows that
\[
\widehat{\phi^2}(i - j) =  \sum_{k\in \Z} \widehat{\phi}(i -k) \widehat{\phi}(k -j), \quad \text{for all $i, j \in \Z$},
\]
hence we have $T(\phi^2)=T(\phi)^2$. The desired inequality now follows immediately:
\[
T_J(\phi)^2 \le P_J T(\phi)^2 P_J    =  P_J T(\phi^2) P_J = T_J(\phi^2).
\]
\end{proof}

\begin{proposition}\label{prop-Lower}
Let $f: \T \to [0,1]$ be a Borel function. Then we have
\begin{align}\label{2-dim-low}
\underline{\dim}_2\mu_f^{+}\geq  \frac{1}{\log 2} \int_{0}^{1}\log\frac{2}{1+ ( 2f(e^{i 2 \pi t})-1)^2}dt.
\end{align}
\end{proposition}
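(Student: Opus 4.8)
The plan is to unwind the definition of $\underline{\dim}_2\mu_f^{+}$ through the explicit formula for $S_N^{(2)}(\mu_f^{+})$. By \eqref{log-ratio}, we have
\[
\underline{\dim}_2\mu_f^{+} = \liminf_{N\to\infty} \frac{\log S_N^{(2)}(\mu_f^{+})}{N\log(1/2)} = -\frac{1}{\log 2}\limsup_{N\to\infty}\frac{\log S_N^{(2)}(\mu_f^{+})}{N},
\]
so a lower bound on the dimension is equivalent to an upper bound on $\limsup_N \frac{1}{N}\log S_N^{(2)}(\mu_f^{+})$. By \eqref{S-exp} and Lemma \ref{lem-q-2}, with $g = 2f-1$,
\[
S_N^{(2)}(\mu_f^{+}) = \frac{1}{2^N}\Sigma_N^{(2)}(g) = \frac{1}{2^N}\sum_{J\subset[N]}\det{}^2 T_J(g).
\]
First I would rewrite the sum $\sum_{J\subset[N]}\det^2 T_J(g)$ as a single determinant. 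Using Lemma \ref{J-square} we have $T_J(g)^2 \le T_J(g^2)$, hence $\det^2 T_J(g) = \det\big(T_J(g)^2\big) \le \det T_J(g^2)$; combining this with the expansion \eqref{det1+L} applied to $\det(I + T_N(g^2))$ gives
\[
\sum_{J\subset[N]}\det{}^2 T_J(g) \le \sum_{J\subset[N]}\det T_J(g^2) = \det\big(I + T_N(g^2)\big).
\]

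**Applying Szeg\H{o}'s theorem.**
Once the sum is bounded by $\det(I + T_N(g^2))$, the key step is to compute the exponential growth rate of this determinant. I would observe that $I + T_N(g^2) = T_N(1 + g^2)$, since the Toeplitz structure is linear in the symbol and the constant $1$ corresponds to the identity. Because $g = 2f-1$ takes values in $[-1,1]$, the symbol $1 + g^2 = 1 + (2f-1)^2$ is a non-negative bounded integrable function on $\T$, so Szeg\H{o}'s First Theorem applies and yields
\[
\lim_{N\to\infty}\frac{\log\det T_N(1 + g^2)}{N} = \int_0^1 \log\big(1 + (2f(e^{i2\pi t})-1)^2\big)\, dt.
\]
Putting the pieces together,
\[
\limsup_{N\to\infty}\frac{1}{N}\log S_N^{(2)}(\mu_f^{+}) \le -\log 2 + \int_0^1 \log\big(1 + (2f(e^{i2\pi t})-1)^2\big)\, dt,
\]
and dividing by $-\log 2$ and rearranging the integrand as $\log\frac{2}{1 + (2f-1)^2}$ produces exactly \eqref{2-dim-low}.

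**The main obstacle.**
The routine part is the chain of determinant manipulations; the one step that requires genuine care is invoking Szeg\H{o}'s theorem, whose classical statement assumes a fixed symbol $\phi \in L^1(\T)$ with $\log\phi \in L^1(\T)$, so I must verify that $1 + (2f-1)^2$ satisfies the integrability hypotheses. Since $0 \le f \le 1$ gives $1 \le 1 + (2f-1)^2 \le 2$, the symbol is bounded above and below by positive constants, so both $\phi$ and $\log\phi$ are automatically in $L^1(\T)$ and the theorem applies without any regularity assumption on $f$ beyond measurability. A secondary subtlety is the passage from the $\liminf$ defining $\underline{\dim}_2$ to the $\limsup$ of the growth rate: because we only have the one-sided inequality $S_N^{(2)} \le 2^{-N}\det T_N(1+g^2)$, we obtain only an upper bound on the $\limsup$ and hence only a lower bound on the dimension, which is precisely what is asserted—so no two-sided control or existence of the limit is needed here.
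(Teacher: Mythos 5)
Your proof is correct and coincides with the paper's own argument: both bound $\det^2 T_J(g)\le \det T_J(g^2)$ via Lemma \ref{J-square}, resum with \eqref{det1+L} to get $\det(I+T_N(g^2))$, and apply Szeg\H{o}'s First Theorem to the symbol $1+g^2$ (the paper folds the $2^{-N}$ into the symbol $(1+g^2)/2$, a purely cosmetic difference). Your extra remarks on the $\liminf$/$\limsup$ bookkeeping and the integrability of the symbol are accurate but do not change the substance.
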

\begin{proof}
Set $g = 2f -1$. By Lemma \ref{lem-q-2} and Lemma  \ref{J-square}, we obtain
\begin{align*}
S_N^{(2)}(\mu_f^{+}) &= 2^{-N}\sum_{J\in[N]}\dt^2 T_J(g)
\leq  2^{-N}\sum_{J\in[N]}\det T_J(g^2) \\
    &=  2^{-N}\det (I+ T_N(g^2))=  \det \left( T_N(\frac{1+g^2}{2})\right).
\end{align*}
Hence, by \eqref{log-ratio}, we have
\begin{align*}
\underline{\dim}_2\mu_f^{+} =\liminf_{N\to \infty}{\log S_{N}^{(2)}(\mu_f^{+}) \over -N \log 2} \geq  \liminf_{N\to \infty}{\log \det \left( T_N(\frac{1+g^2}{2})\right) \over -N \log 2}.\\
\end{align*}
Now the inequality \eqref{2-dim-low} follows immediately by applying  Szeg\"{o}'s First Theorem to the function $(1 + g^2)/2$.
\end{proof}

\begin{proposition}\label{prop-upper}
Let $f: \T \to [0,1]$ be a Borel function. Then we have
\[\overline{\dim}_2\mu_f^{+}\leq 1- \frac{1}{ \log 2} \int_{0}^{1}\log \left( \frac{[1+\beta (2f(e^{i 2 \pi t}) -1)]^2}{1+\beta^2}\right) d t,  \quad \forall \beta \in [-1,1]. \]
\end{proposition}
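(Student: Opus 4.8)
The plan is to bound $\overline{\dim}_2\mu_f^{+}$ from above, which, in view of \eqref{log-ratio} and the fact that $0\le S_N^{(2)}(\mu_f^{+})\le 1$, amounts to producing a good \emph{lower} bound for $S_N^{(2)}(\mu_f^{+})$. Recall from Lemma \ref{lem-q-2} together with \eqref{S-exp} that, writing $g=2f-1\in[-1,1]$,
\[
S_N^{(2)}(\mu_f^{+})=\frac{1}{2^N}\Sigma_N^{(2)}(g)=\frac{1}{2^N}\sum_{J\subset[N]}\dt^2 T_J(g).
\]
The auxiliary parameter $\beta$ enters through the determinantal expansion \eqref{det1+L}: applying it to the matrix $L=\beta T_N(g)$ and using $(\beta T_N(g))_J=\beta T_J(g)$ yields
\[
\dt(I+\beta T_N(g))=\sum_{J\subset[N]}\beta^{|J|}\dt T_J(g).
\]

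First I would apply the Cauchy--Schwarz inequality to this identity, viewing it as an inner product of the vectors $(\beta^{|J|})_J$ and $(\dt T_J(g))_J$. Since $\sum_{J\subset[N]}\beta^{2|J|}=\sum_{k=0}^N\binom{N}{k}\beta^{2k}=(1+\beta^2)^N$, this gives
\[
\big(\dt(I+\beta T_N(g))\big)^2\le(1+\beta^2)^N\sum_{J\subset[N]}\dt^2 T_J(g),
\]
and hence the desired lower bound
\[
S_N^{(2)}(\mu_f^{+})\ge\frac{1}{2^N(1+\beta^2)^N}\big(\dt(I+\beta T_N(g))\big)^2.
\]
Note that squaring causes no sign problem here, since the square is produced automatically by Cauchy--Schwarz.

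Next I would identify $I+\beta T_N(g)=T_N(1+\beta g)$ using the linearity of the Toeplitz map and $T_N(1)=I$. For $\beta\in[-1,1]$ and $g\in[-1,1]$ one has $1+\beta g\ge 1-|\beta|\ge 0$, so $1+\beta g$ is a non-negative integrable symbol and $T_N(1+\beta g)$ is non-negative definite; in particular its determinant is non-negative and Szeg\"o's First Theorem applies. Taking logarithms in the lower bound, dividing by $-N\log 2$ (which reverses the inequality), and passing to the $\limsup$, the two constant terms contribute $1+\log(1+\beta^2)/\log 2$, while Szeg\"o's First Theorem gives
\[
\lim_{N\to\infty}\frac{\log\dt T_N(1+\beta g)}{N}=\int_0^1\log\big(1+\beta(2f(e^{i2\pi t})-1)\big)\,dt.
\]
Reassembling the three contributions and rewriting $1+\log(1+\beta^2)/\log 2-(2/\log 2)\int_0^1\log(1+\beta g)\,dt$ in the form $1-(1/\log 2)\int_0^1\log\big([1+\beta g]^2/(1+\beta^2)\big)\,dt$ yields the claim for every $\beta\in[-1,1]$.

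The argument is essentially routine once the decisive idea is in place: introducing the auxiliary parameter $\beta$ and using Cauchy--Schwarz to convert the intractable sum $\sum_{J}\dt^2 T_J(g)$ into a single Toeplitz determinant accessible to Szeg\"o's theorem. The only point requiring care is the non-negativity $1+\beta g\ge 0$ guaranteeing that Szeg\"o's theorem applies; at the endpoints $\beta=\pm 1$ the symbol $1+\beta g$ may vanish on a set of positive measure, in which case the integral is $-\infty$ and the asserted upper bound is vacuously $+\infty$, so these cases need no separate treatment.
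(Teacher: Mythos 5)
Your proposal is correct and follows essentially the same route as the paper: your Cauchy--Schwarz step applied to the expansion $\dt(I+\beta T_N(g))=\sum_{J\subset[N]}\beta^{|J|}\dt T_J(g)$ is precisely the paper's Lemma \ref{Upper}, after which both arguments identify $I+\beta T_N(g)=T_N(1+\beta g)$ and conclude via Szeg\"o's First Theorem, using $1+\beta g\ge 0$ for $\beta\in[-1,1]$. Your additional remark about the endpoint cases where $\int_0^1\log(1+\beta g)\,dt=-\infty$ (making the bound vacuous) is a harmless refinement the paper leaves implicit.
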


\begin{lemma}\label{Upper}
For $\beta\in \R$,  we have
\[\Sigma_{N}^{(2)}(g) \geq \frac{\dt^2(T_{N}(1+\beta g))}{(1+\beta^2)^N}= \dt^2\Big(T_{N}(\frac{1+\beta g}{\sqrt{1+\beta^2}})\Big).\]
\end{lemma}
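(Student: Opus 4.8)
The plan is to reduce the inequality to a single application of the Cauchy--Schwarz inequality, after rewriting both sides via the expansion \eqref{det1+L}. Throughout I write $a_J := \det T_J(g)$ as in \eqref{def-Walsh}, so that Lemma \ref{lem-q-2} reads $\Sigma_N^{(2)}(g) = \sum_{J \subset [N]} a_J^2$. First I would dispose of the stated equality on the right: since $\phi \mapsto T_N(\phi)$ is linear, one has $T_N\big(\tfrac{1+\beta g}{\sqrt{1+\beta^2}}\big) = (1+\beta^2)^{-1/2} T_N(1+\beta g)$, and the homogeneity $\det(cM) = c^N \det M$ for an $N\times N$ matrix gives $\dt^2 T_N\big(\tfrac{1+\beta g}{\sqrt{1+\beta^2}}\big) = (1+\beta^2)^{-N}\, \dt^2 T_N(1+\beta g)$, which is exactly the claimed equality. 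It therefore remains only to prove the inequality.

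Next I would expand the numerator explicitly. Since $\widehat{1}(n) = \delta_{n,0}$, we have $T_N(1) = I$, hence $T_N(1+\beta g) = I + \beta T_N(g)$. Applying the expansion \eqref{det1+L} to $L = \beta T_N(g)$, and using that the principal submatrix satisfies $\big(\beta T_N(g)\big)_J = \beta T_J(g)$ together with $\det(\beta T_J(g)) = \beta^{|J|}\det T_J(g)$, I obtain
\[
\det T_N(1+\beta g) = \sum_{J \subset [N]} \beta^{|J|} a_J.
\]

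The inequality then follows by Cauchy--Schwarz applied to the two families $(\beta^{|J|})_{J \subset [N]}$ and $(a_J)_{J \subset [N]}$, namely
\[
\Big( \sum_{J \subset [N]} \beta^{|J|} a_J \Big)^2 \le \Big( \sum_{J \subset [N]} \beta^{2|J|} \Big)\Big( \sum_{J \subset [N]} a_J^2 \Big),
\]
where the first factor on the right is evaluated by the binomial theorem: grouping subsets by cardinality and using that $[N]$ has $\binom{N}{k}$ subsets of size $k$,
\[
\sum_{J \subset [N]} \beta^{2|J|} = \sum_{k=0}^N \binom{N}{k}\beta^{2k} = (1+\beta^2)^N.
\]
Combining the last three displays and dividing by $(1+\beta^2)^N$ gives $\frac{\dt^2 T_N(1+\beta g)}{(1+\beta^2)^N} \le \sum_{J\subset[N]} a_J^2 = \Sigma_N^{(2)}(g)$, as desired.

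This argument carries no serious obstacle: the only point requiring care is the bookkeeping in the expansion \eqref{det1+L}, i.e. checking that $\det L_J$ with $L = \beta T_N(g)$ produces exactly the scalar factor $\beta^{|J|}$, after which the estimate is pure Cauchy--Schwarz. I note that the proof is valid for every $\beta \in \R$; the restriction $\beta \in [-1,1]$ appearing in Proposition \ref{prop-upper} is needed only afterwards, to ensure that the symbol $\tfrac{1+\beta g}{\sqrt{1+\beta^2}}$ is nonnegative so that Szeg\"{o}'s First Theorem may be applied.
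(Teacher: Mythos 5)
Your proof is correct and follows essentially the same route as the paper's: expand $\det T_N(1+\beta g)=\sum_{J\subset[N]}\beta^{|J|}\det T_J(g)$ via \eqref{det1+L}, apply Cauchy--Schwarz against $\Sigma_N^{(2)}(g)=\sum_{J\subset[N]}\det^2 T_J(g)$, and evaluate $\sum_{J\subset[N]}\beta^{2|J|}=(1+\beta^2)^N$. The only differences are cosmetic — you spell out the normalization equality and use the binomial theorem where the paper writes $\det(I+\beta^2 I)$.
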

\begin{proof}
By Cauchy-Schwartz inequality, we have
\begin{align*}
 \dt^2(T_{N}(1+\beta g)) &  = \Big(  \sum_{J\in [N]} \det (T_{J}(g))\cdot \beta^{|J|}\Big)^2 \leq \Big(\sum _{J\in [N]} \dt^2 T_{J}(g)\Big) \Big(\sum _{J\in[T]} \beta^{2|J|}\Big) \\
&= \Big(\sum _{J\in [N]} \dt^2 T_{J}(g)\Big) \det\left(I+\beta^2 I\right)
\\
& = \Sigma_N^{(2)}(g)  \cdot ( 1 + \beta^2)^N.
\end{align*}
This completes the proof of the lemma.
\end{proof}

\begin{proof}[Proof of Proposition \ref{prop-upper}]
Set $g = 2 f -1$.  By \eqref{log-ratio}, we have
\begin{align*}
\overline{\dim}_2\mu_f^{+}=\limsup_{N\to \infty}{\log S_{N}^{(2)}(\mu_f^{+}) \over -N \log 2} =  \limsup_{N\to \infty}\left(1- {\log \Sigma_N^{(2)}(g) \over N \log 2} \right).
\end{align*}
By Lemma \ref{Upper}, we have
\[ \overline{\dim}_2\mu_f \leq 1-  \lim_{N\to  \infty}  \frac{ 2 \log \left(\dt\left(T_{N}(\frac{1+\beta g}{\sqrt{1+\beta^2}})\right) \right)}{N\log2}. \]
Since for $\beta \in [-1, 1]$, we have $1 + \beta g \ge 0$.  By Szeg\"{o}'s first Theorem,   we have
\[ \lim_{N\to  \infty}  \frac{ 2 \log \Big(\dt\Big(T_{N}(\frac{1+\beta g}{\sqrt{1+\beta^2}})\Big)\Big)}{N} = \int_{0}^{1}\log\frac{(1+\beta g(e^{i 2 \pi \theta}))^2} {1+\beta^2}d\theta. \]
Hence, we have
$$\overline{\dim}_2\mu_f^{+} \leq 1- \frac{1}{\log 2} \int_{0}^{1}\log\frac{(1+\beta g(e^{i 2 \pi \theta}))^2} {1+\beta^2}d \theta,  \quad \forall \beta \in [-1,1].$$
\end{proof}

\section{An application: Increasing rate of longest common substring}

Let $\mu$ be a shift-invariant probability measure on $\{0,1\}^\N$. For any $n \ge 1$ and any two sequences $x, y \in \{0,1\}^{\mathbb{N}}$,  we define the length of their longest common substring in their prefixes of length $n$ by
\[M_n(x,y)=\max\{m:x_{i+k}=y_{j+k}\textrm{ for $k=1,\dots,m$ and for some $1\leq i,j\leq n-m$}\}.\]
Recently, Barros, Liao and Rousseau \cite{BLRADV} showed that the correlation dimension of $\mu$ describes the increasing rate of  $M_n(x, y)$ for $\mu\otimes \mu$-typical pair of $(x, y) \in \{0,1\}^\N\times \{0,1\}^\N$.

\begin{theorem}[{\cite[Theorem 7]{BLRADV}}]\label{thm-BLR}
For $\mu\otimes\mu$-almost every $(x,y)\in  \{0,1\}^\N \times  \{0,1\}^\N$,
\[ \underset{n\rightarrow+\infty}{\overline\lim}\frac{\log M_n(x,y)}{\log n}\leq\frac{2/\log 2}{  \underline{\dim}_2\mu}.\]
Moreover, if the $\mu$ is $\psi$-mixing with $\psi(\ell)=O(\ell^{-a})$ for some $a>0$, then for $\mu\otimes\mu$-almost every $(x,y)\in \{0,1\}^\N\times \{0,1\}^\N$,
\[ \underset{n\rightarrow+\infty}{\underline\lim}\frac{\log M_n(x,y)}{\log n} \ge \frac{2/\log 2}{\overline{\dim}_2\mu}.\]
\end{theorem}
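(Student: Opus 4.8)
The plan is to extract both inequalities from the correlation dimension through the first- and second-moment methods applied to the match-counting random variable
\[
W_{n,m}(x,y) := \#\big\{(i,j): 1\le i,j\le n-m,\ x_{i+1}\cdots x_{i+m}=y_{j+1}\cdots y_{j+m}\big\},
\]
the point being that $M_n(x,y)\ge m$ holds if and only if $W_{n,m}(x,y)\ge 1$. Since $x$ and $y$ are independent under $\mu\otimes\mu$ and $\mu$ is shift-invariant, the chance that two fixed length-$m$ blocks coincide is $\sum_{w\in\{0,1\}^m}\mu([w])^2 = S_m^{(2)}(\mu)$ (compare \eqref{comp-S-N}), so that $\E_{\mu\otimes\mu}[W_{n,m}]=(n-m)^2 S_m^{(2)}(\mu)$. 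Recalling that $\underline{\dim}_2\mu=\liminf_m \frac{\log S_m^{(2)}(\mu)}{-m\log 2}$ and $\overline{\dim}_2\mu=\limsup_m\frac{\log S_m^{(2)}(\mu)}{-m\log 2}$, for every $\varepsilon>0$ and all large $m$ one has $2^{-m(\overline{\dim}_2\mu+\varepsilon)}\le S_m^{(2)}(\mu)\le 2^{-m(\underline{\dim}_2\mu-\varepsilon)}$.

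\emph{Upper bound.} First I would bound the $\limsup$ by a union argument: $\mu\otimes\mu(M_n\ge m)\le \E[W_{n,m}]\le n^2\,2^{-m(\underline{\dim}_2\mu-\varepsilon)}$. Choosing $m=m_n$ slightly above $\frac{2\log n}{(\underline{\dim}_2\mu-\varepsilon)\log 2}$ makes $\mu\otimes\mu(M_n\ge m_n)$ decay like a negative power of $n$, hence summable along the geometric subsequence $n_k=2^k$. The Borel--Cantelli lemma then gives $M_{n_k}<m_{n_k}$ eventually almost surely, and the monotonicity of $n\mapsto M_n$ together with $\log n_{k+1}/\log n_k\to 1$ upgrades this to all $n$; letting $\varepsilon\to0$ yields $\limsup_{n} \frac{M_n}{\log n}\le \frac{2/\log 2}{\underline{\dim}_2\mu}$.

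\emph{Lower bound.} This is the substantive half, and it is exactly where the hypothesis $\psi(\ell)=O(\ell^{-a})$ is used. I would fix $m=m_n$ slightly below $\frac{2\log n}{(\overline{\dim}_2\mu+\varepsilon)\log 2}$, so that $\E[W_{n,m}]\ge n^{2\eta}\to\infty$ for a suitable $\eta>0$, and then run the Paley--Zygmund second-moment method: it suffices to verify $\E[W_{n,m}^2]\le(1+o(1))\big(\E[W_{n,m}]\big)^2$, which forces $\mu\otimes\mu(W_{n,m}\ge1)\to1$, i.e. $M_n\ge m_n$ with high probability. Expanding $\E[W_{n,m}^2]$ over quadruples of positions $(i,j,i',j')$, the $\psi$-mixing estimate $\mu(A\cap B)\le(1+\psi(\ell))\mu(A)\mu(B)$ for events carried by coordinate blocks separated by a gap $\ell$ decouples the two matching events up to a factor $1+O(\ell^{-a})$ whenever the four blocks are pairwise well separated.

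The main obstacle is this second-moment estimate. One must split the quadruple sum according to the separations $|i-i'|$ and $|j-j'|$: the diagonal and the overlapping or short-range terms have to be shown to be of lower order than $(\E W_{n,m})^2$, while the bulk of well-separated terms must be assembled, via the $\psi$-mixing bound, into $(1+o(1))(\E W_{n,m})^2$. Here the polynomial decay $\psi(\ell)=O(\ell^{-a})$ does double duty: it guarantees that the accumulated multiplicative errors sum to $o(1)$, and it makes the resulting failure probabilities summable along $n_k=2^k$ for a final Borel--Cantelli argument. Passing to this subsequence and invoking monotonicity of $M_n$ then produces $\liminf_{n} \frac{M_n}{\log n}\ge \frac{2/\log 2}{\overline{\dim}_2\mu}$ after sending $\varepsilon\to0$.
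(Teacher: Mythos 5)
Your proposal cannot be compared with a proof in the paper itself: the paper does not prove this theorem, it imports it verbatim as Theorem~7 of \cite{BLRADV} and uses it as a black box in Section~5. Measured against the proof in that reference, your overall strategy is the same one: a first-moment/Borel--Cantelli argument along the subsequence $n_k=2^k$ for the upper bound, and a second-moment (Chebyshev/Paley--Zygmund) argument with $\psi$-mixing for the lower bound. Your upper-bound half is correct and essentially complete.

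The lower bound, however, has a genuine hole exactly at the step you defer. The case analysis as you describe it --- ``diagonal and overlapping or short-range terms are of lower order, well-separated terms decouple'' --- fails for the \emph{partially} separated quadruples, say $|i-i'|\le g+m$ while $|j-j'|$ is large. For these, the only bound available without further ideas is the trivial one, $(\mu\otimes\mu)(A\cap B)\le(\mu\otimes\mu)(A)=S_m^{(2)}$, giving a total contribution of order $n^3(g+m)\,S_m^{(2)}$. With your choice of $m_n$ one has $S_{m_n}^{(2)}\approx n^{-(2-\varepsilon')}$, hence
\[
\frac{n^3(g+m)\,S_m^{(2)}}{\bigl(\E[W_{n,m}]\bigr)^2}\approx\frac{g+m}{n\,S_m^{(2)}}\approx (g+m)\,n^{1-\varepsilon'}\longrightarrow\infty,
\]
so these terms are \emph{not} of lower order than $(\E[W_{n,m}])^2$; they dominate it. To control them one must first apply the $\psi$-mixing bound in the $y$-coordinate only (conditionally on $x$), which reduces such a term to a \emph{triple} correlation sum $\sum_{w}\mu([w])^3$ rather than a product of pair probabilities, and then invoke the additional inequality $\sum_w\mu([w])^3\le\max_w\mu([w])\cdot S_m^{(2)}\le (S_m^{(2)})^{3/2}$, valid since $\max_w\mu([w])\le (S_m^{(2)})^{1/2}$. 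This yields a relative contribution of order $(g+m)/(n\,(S_m^{(2)})^{1/2})\approx (g+m)\,n^{-\varepsilon'/2}$, which is $o(1)$ only if the gap $g$ is chosen as a sufficiently small \emph{power} of $n$ --- and that, in turn, is exactly where the polynomial rate $\psi(\ell)=O(\ell^{-a})$ is indispensable, both to make the decoupling factors $(1+\psi(g))$ of size $1+O(n^{-a\delta})$ and to make the resulting failure probabilities polynomially small, hence summable along $n_k=2^k$. None of this (one-coordinate conditioning, the third-moment versus pair-correlation inequality, the polynomial gap) appears in your sketch, and without it the variance bound you assert cannot be established; this is the core of the argument in \cite{BLRADV}, not a routine verification.
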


\begin{remark}\label{rem-sob}
For any $s>0$,  a function  $f: \T\rightarrow \C$ is said to be in the Sobolev space $H^{s}(\T)$ if
 \begin{align}\label{def-Sob-g}
\sum_{n=-\infty}^{+\infty}|n|^{2s}|\widehat{f}(n)|^{2}<  \infty.
\end{align}
In particular, if $s> 1/2$,  then \eqref{def-Sob-g} implies  (see, e.g., Moricz \cite[Lemma 1]{Moricz})
\[
\sum_{|n|> \ell} | n | |\widehat{f}(n)|^2 = O (\ell^{- (2s-1)}).
\]
\end{remark}

By Theorems \ref{thm-BLR}, \ref{psi-mixing}, \ref{thm-CD} and Remark \ref{rem-sob}, we have the following corollary.
\begin{corollary}
Assume that $f \in H^{1/2 + \varepsilon}(\T)$  for some $\varepsilon > 0$ such that either $1/2 \le f \le 1 - \tau$  or $\tau \le f \le 1/2$ for some $\tau>0$,  then for $\mu_f^{+}\otimes\mu_f^{+}$-almost every $(x, y)\in \{0, 1\}^\N \times \{0, 1\}^\N$, we have
\[ \underset{n\rightarrow+\infty}{\lim}\frac{\log M_n(x, y)}{\log n}=\frac{2/\log 2}{{\dim}_2\mu_f^{+}}.\]
\end{corollary}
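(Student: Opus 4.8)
The plan is to verify that the hypotheses of Theorem~\ref{thm-BLR} are met and then combine the quantitative outputs of Theorems~\ref{psi-mixing} and~\ref{thm-CD} to upgrade the two one-sided bounds into a single limit. First I would record that the assumption $f \in H^{1/2+\varepsilon}(\T)$ with either $1/2 \le f \le 1-\tau$ or $\tau \le f \le 1/2$ places us squarely in the regime where both of our main theorems apply. Indeed, in either case there is a two-sided pinching $\tau' \le f \le 1-\tau'$ for a suitable $\tau' > 0$ (in the first case take $\tau' = \min\{\tau, 1/2\}$, in the second likewise), and since $H^{1/2+\varepsilon}(\T) \subset H^{1/2}(\T)$, the sufficient condition \eqref{suff-cond} of Theorem~\ref{psi-mixing} holds. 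Hence $\mu_f$ is $\psi$-mixing; moreover, each such $f$ satisfies either $f \le 1/2$ or $f \ge 1/2$, which is precisely the hypothesis of Theorem~\ref{thm-CD}, so the $L^2$-dimension $\dim_2 \mu_f^{+}$ exists and in particular $\underline{\dim}_2\mu_f^{+} = \overline{\dim}_2\mu_f^{+} = \dim_2\mu_f^{+}$.

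Next I would establish the polynomial decay rate of the $\psi$-function required by the second half of Theorem~\ref{thm-BLR}. This is where Remark~\ref{rem-sob} does the work: from $f \in H^{1/2+\varepsilon}(\T)$ we obtain
\[
\sum_{|n| > \ell} |n|\,|\widehat{f}(n)|^2 = O(\ell^{-2\varepsilon}).
\]
Feeding this tail estimate into the upper bound \eqref{upper-bound} of Theorem~\ref{psi-mixing}, and using that the exponential factor there stays bounded once the tail sum tends to $0$, yields
\[
\psi_{\mu_f}(\ell) \le \frac{1}{(\tau')^2}\Big(\sum_{n=\ell+1}^\infty |n|\,|\widehat{f}(n)|^2\Big)\exp\Big(1 + \frac{1}{(\tau')^2}\sum_{n=\ell+1}^\infty |n|\,|\widehat{f}(n)|^2\Big) = O(\ell^{-2\varepsilon}).
\]
Thus $\psi(\ell) = O(\ell^{-a})$ with $a = 2\varepsilon > 0$, exactly the decay hypothesis needed in Theorem~\ref{thm-BLR}.

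Finally I would assemble the two bounds. Theorem~\ref{thm-BLR} gives, for $\mu_f^{+}\otimes\mu_f^{+}$-almost every $(x,y)$, the upper bound $\overline{\lim}\,\frac{\log M_n(x,y)}{\log n} \le \frac{2/\log 2}{\underline{\dim}_2\mu_f^{+}}$ and, using the $\psi$-mixing decay just established, the lower bound $\underline{\lim}\,\frac{\log M_n(x,y)}{\log n} \ge \frac{2/\log 2}{\overline{\dim}_2\mu_f^{+}}$. Since $\underline{\dim}_2\mu_f^{+} = \overline{\dim}_2\mu_f^{+} = \dim_2\mu_f^{+}$ by Theorem~\ref{thm-CD}, the two fractions coincide, forcing $\underline{\lim}$ and $\overline{\lim}$ to agree and equal $\frac{2/\log 2}{\dim_2\mu_f^{+}}$. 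I do not expect a genuine obstacle here; the one point requiring care is the passage between $\mu_f$ on $\{0,1\}^\Z$ (where the $\psi$-mixing statement of Theorem~\ref{psi-mixing} lives) and $\mu_f^{+}$ on $\{0,1\}^\N$ (where the dimension and the longest-common-substring statements live), but the $\psi$-mixing rate of the one-sided process is controlled by that of the two-sided one, so the decay estimate transfers directly, and the two-sided bounds then pinch to a single value.
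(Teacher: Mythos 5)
Your proposal is correct and follows essentially the same route as the paper, which states this corollary as an immediate combination of Theorem~\ref{thm-BLR}, Theorem~\ref{psi-mixing}, Theorem~\ref{thm-CD} and Remark~\ref{rem-sob} without writing out the details: pinch $f$ to get \eqref{suff-cond}, use Remark~\ref{rem-sob} and \eqref{upper-bound} to get $\psi(\ell)=O(\ell^{-2\varepsilon})$, and let the existence of $\dim_2\mu_f^{+}$ collapse the two one-sided bounds of Theorem~\ref{thm-BLR} into a limit. Your explicit handling of the passage from the two-sided measure $\mu_f$ (where Theorem~\ref{psi-mixing} lives) to the one-sided $\mu_f^{+}$ (where Theorem~\ref{thm-BLR} is applied) — valid because $\mu_f^{+}$ is the restriction of $\mu_f$ to the nonnegative coordinates, so the one-sided $\psi$-function is dominated by the two-sided one — fills a point the paper leaves implicit.
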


\section*{Acknowledgements}
S. Fan is supported by NSFC 11971190 and  the Fundamental Research Funds for the Central Universities(CCNU19QN076). Y. Qiu is supported by grants NSFC Y7116335K1,  NSFC 11801547 and NSFC 11688101 of National Natural Science Foundation of China.

%
%
%\bibliography{mybib}
\bibliographystyle{plain}

\end{document}